\mathchardef\mhyphen="2D
\newtheorem{theorem}{Theorem}[section]
\newtheorem{lemma}[theorem]{Lemma}
\newtheorem{corollary}[theorem]{Corollary}
\newtheorem{proposition}[theorem]{Proposition}
\newtheorem{example}[theorem]{Example}
\newtheorem{remark}[theorem]{Remark}
\begin{document}
\title
[The Anderson ring]
{A special subring of the Nagata ring and\\ the Serre's conjecture ring}

\author [H. Baek] {Hyungtae Baek}
\address{(Baek) School of Mathematics,
Kyungpook National University, Daegu 41566,
Republic of Korea}
\email{htbaek5@gmail.com}

\author [J. W. Lim] {Jung Wook Lim}
\address{(Lim) Department of Mathematics,
College of Natural Sciences,
Kyungpook National University, Daegu 41566,
Republic of Korea}
\email{jwlim@knu.ac.kr}

\thanks{Words and phrases:
Anderson ring, Nagata ring, Serre's conjecture ring, von Neumann regular ring}

\thanks{$2020$ Mathematics Subject Classification: 13A15, 13B25, 13B30}



\begin{abstract}
Many ring theorists researched various properties of Nagata rings and Serre's conjecture rings.
In this paper, we introduce a subring (refer to the Anderson ring) of both the Nagata ring and the Serre's conjecture ring (up to isomorphism), and investigate properties of the Anderson ring.
Additionally, we compare the properties of the Anderson ring with those of the Nagata ring and the Serre's conjecture ring.
\end{abstract}

\maketitle

\section{Introduction}

Throughout this paper, $R$ is a commutative ring with identity and
$R[X]$ is the polynomial ring over $R$.
For the sake of clarity, we use $D$
instead of $R$ when $R$ is an integral domain.
Additionally, ${\rm Spec}(R)$ is the set of prime ideals of $R$,
and ${\rm Max}(R)$ is the set of maximal ideals of $R$.

Consider a field $K$ and
let $\alpha \in K$.
Recall that the {\it localization} at $\alpha$,
denoted by $K[X]_{M_{\alpha}}$,
is the set $\{\frac{f}{g} \,|\, f,g \in K[X]
\text{ and } g(\alpha) \neq 0\}$.
When $\alpha = 0$,
we obtain the ring
$ K[X]_{M_0} = K[X]_{(X)} =
\{\frac{f}{g} \,|\, f,g \in K[X] \text{ and } g(0) \neq 0 \}$.
Observe that the complement of the ideal $(X)$
is the set of polynomials over $K$
whose constant term is a unit in $K$.
To generalize this,
consider the set
$\overline{A} :=
\{f \in R[X] \,|\, f(0) \text{ is a unit in $R$}\}$
which is the saturation of
$A := \{f \in R[X] \,|\, f(0) = 1\}$.
Also, it is clear that $A$ is a multiplicative subset of $R[X]$,
so we obtain the quotient ring $R[X]_A$ of $R[X]$ by $A$.
In \cite{anderson 1985},
the authors mentioned the set $A$,
so we refer to the ring $R[X]_A$
as the {\it Anderson ring} of $R$.
In this paper, we examine some properties of the Anderson ring.

Consider the following multiplicative subsets of $R[X]$:
\begin{center}
$N = \{f \in R[X] \,|\, c(f) = R\}$ and
$U = \{f \in R[X] \,|\, f \text{ is monic}\}$,
\end{center}
where $c(f)$ is the ideal of $R$ generated by
the coefficients of $f$.
The quotient ring $R[X]_N$ is called the {\it Nagata ring} of $R$, and
the quotient ring $R[X]_U$ is called the {\it Serre's conjecture ring} of $R$. (Some authors denote $R[X]_N$ by
$R(X)$, and $R[X]_U$ by $R\left<X\right>$.)
In 1936, Krull constructed Nagata rings \cite{krull 1936}, and then
it was studied by Nagata \cite{nagata 1956, nagata 1962}.
In 1955, Serre posed the question:
`It is not known whether there exist projective $k[X_1,\dots,X_n]$-modules of finite type which are not free,
where $k$ is a field' \cite{serre 1955}.
In 1976, Quillen suggested a solution to this question: if $D$ is a principal ideal domain,
then every finitely generated projective $D[X_1,\dots,X_n]$-module is free \cite{quillen 1976}.
To obtain the above answer,
the author constructed the Serre's conjecture rings.
The reader can refer to \cite{anderson 1985, brewer 1980, chang 2005, huckaba 1988, jarrar 2018, kang 1989, lim 2014, lucas 2020, wang book}
for the Nagata rings and the Serre's conjecture rings.

Let $\widetilde{U}$ be the set of polynomials
whose coefficient of the lowest degree term is $1$.
Then it is clear that $\widetilde{U}$ is a multiplicative subset of $R[X]$
containing the set $A$.
Additionally, the map
$R[X]_U \to R[X]_{\widetilde{U}}$ given by
$X \mapsto X^{-1}$ is an isomorphism.
Hence we obtain the facts that
$R[X]_A$ is a subring of $R[X]_U$ in the isomorphic sense,
and $R[X]_A$ is a subring of $R[X]_N$.
More precisely, it is easy to check that
$(R[X]_A)[\frac{1}{X}] = R[X]_{\widetilde{U}}$,
which means that Serre's conjecture ring is an extension of the Anderson ring.

Note that $R[X]_N$ and $R[X]_U$ are faithfully flat $R$-modules,
so $R[X]_N$ and $R[X]_U$ share many ideal and ring-theoretic properties with $R$.
Similarly, it is easy to show that $R[X]_A$ is also a faithfully flat $R$-module,
so we can expect that
the Anderson ring and their base rings share many
ideal and ring-theoretic properties.
In this paper, we examine some ideal and ring-theoretic properties shared by the Anderson ring and the base rings.

This paper consists of four sections including introduction.
In Section \ref{sec 2},
we investigate the maximal spectrum of the Anderson ring.
We show that ${\rm Max}(R[X]_A) = \{(M+XR[X])_A \,|\, M \in {\rm Max}(R)\}$ (Theorem \ref{maximal cor}).
After examining the maximal spectrum of the Anderson ring,
we investigate some properties of the Anderson ring
related to maximal spectrum.
We show that $\dim(R[X]_A) = \dim(R[X])$ (Proposition \ref{dimension}), and
we examine local properties of the Anderson ring.
In Section \ref{sec 3},
we investigate the Anderson ring over von Neumann regular rings.
We first show that
$R$ is both a von Neumann regular ring and a PIR if and only if
$R[X]_A$ is a one-dimensional PIR (Theorem \ref{PIR 2}).
Also, we investigate the condition of $R$ under which
the Anderson ring become Pr\"ufer-like rings
({\it e.g.}, semi-hereditary ring, arithmetical ring, Gaussian ring, etc)
(Theorem \ref{von, Prufer-like}).
In Section \ref{sec 4},
we examine star-operations on the Anderson ring.
More precisely, we investigate the $w$-maximal spectrum of the Anderson ring;
we show that
$w\mhyphen{\rm Max}(D[X]_A) =
\{MD[X]_A \,|\, w\mhyphen{\rm Max}(D)\}
\cup \{\mathfrak{p}D[X]_A \,|\,
\text{$\mathfrak{p} \in w$-Max$(D[X])$ is an upper to zero in $D[X]$ disjoint from $A$} \}$
(Theorem \ref{maximal w-ideal}).
After examining the above fact,
we investigate some properties of the Anderson ring
which are related to the $w$-maximal spectrum of the Anderson ring.
We prove that $D$ has finite $w$-character if and only if
$D[X]_A$ has finite $w$-character (Proposition \ref{w-finite character}),
and we also examine $w$-local properties of the Anderson ring.

We should notice that some of the results
(Theorems \ref{maximal cor} and \ref{von, Prufer-like}, Propositions \ref{dimension} and \ref{Hilbert ring}, and Remark \ref{dimension remark}(2)(ii))
also appeared in \cite{a}.
However, such results were already contained in \cite{baek},
which is the master thesis of the first author,
prior to the publication of \cite{a}.
Also, we include the proof of such results for the convenience of the readers.

\section{Maximal ideals of $R[X]_A$}\label{sec 2}

Let $R$ be a commutative ring with identity.
In this section, we investigate maximal ideals of $R[X]_A$,
and then we examine some properties of $R[X]_A$
that follow from the properties of its maximal ideals.
A well-known fact is that there is a one-to-one correspondence
between the maximal ideals of $R$ and
the maximal ideals of $R[X]_N$.
In fact, ${\rm Max}(R[X]_N) = \{MR[X]_N \,|\, M \in {\rm Max}(R)\}$
\cite[Proposition 33.1(3)]{gilmer book}.
In \cite{riche 1980}, Le Riche showed that
the extension of a maximal ideal of $R$ to $R[X]_U$ is also a maximal ideal of $R[X]_U$,
but there is no one-to-one correspondence between the maximal ideals of $R$ and
the maximal ideals of $R[X]_U$ when $R$ is a one-dimensional integral domain
\cite[Lemma 3.2]{lucas 2020}.
The next result shows that the maximal spectrum of $R[X]_A$ can be characterized.

\begin{theorem}\label{maximal cor}
Let $R$ be a commutative ring with identity.
Then the following assertions hold.
\begin{enumerate}
\item[(1)]
There is a one-to-one correspondence between
the minimal prime ideals of $R$ and the minimal prime ideals of $R[X]_A$.
In fact, every minimal prime ideal of $R[X]_A$ is of the form $PR[X]_A$
for some minimal prime ideal $P$ of $R$.
\item[(2)]
There is a one-to-one correspondence between
the maximal ideals of $R$ and the maximal ideals of $R[X]_A$.
In fact, ${\rm Max}(R[X]_A) = \{(M+XR[X])_A \,|\, M \in {\rm Max}(R)\}$.
\end{enumerate}
\end{theorem}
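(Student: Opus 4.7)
The plan is to work inside $R[X]$ and use the standard order-preserving bijection between primes of $R[X]_A$ and primes of $R[X]$ disjoint from the saturation $\overline{A} = \{f \in R[X] : f(0) \in R^\times\}$. Under this bijection, minimal (resp.\ maximal) ideals of $R[X]_A$ correspond to primes of $R[X]$ minimal (resp.\ maximal) among those disjoint from $\overline{A}$. The technical hinge is the observation that, for any ideal $Q$ of $R[X]$, the evaluation image $\pi(Q) := \{f(0) : f \in Q\}$ is an ideal of $R$, and $Q$ is disjoint from $\overline{A}$ precisely when $\pi(Q) \subsetneq R$; this rephrases the multiplicative condition on $\overline{A}$ as an ideal-theoretic one.

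For part (1), I would first invoke the familiar bijection $\min(R) \leftrightarrow \min(R[X])$ given by $P \mapsto PR[X]$. Each $PR[X]$ is automatically disjoint from $\overline{A}$, since its elements have constant terms in $P \subsetneq R$. Conversely, any prime of $R[X]$ that is minimal among those disjoint from $\overline{A}$ must already be a minimal prime of $R[X]$: it contains some minimal prime $Q_0$ of $R[X]$, and $Q_0$ is itself disjoint from $\overline{A}$, so minimality forces equality. Combining these, the minimal primes of $R[X]_A$ are precisely the ideals $PR[X]_A$ for $P \in \min(R)$.

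For part (2), take $\mathfrak{Q} \in \mathrm{Max}(R[X]_A)$ and let $Q$ be its contraction to $R[X]$. Then $Q$ is prime and maximal among primes disjoint from $\overline{A}$, so $I := \pi(Q)$ is a proper ideal of $R$; choose $M \in \mathrm{Max}(R)$ with $I \subseteq M$. Writing each $f \in Q$ as $f(0) + Xg(X)$ shows $Q \subseteq M + XR[X]$. Since $R[X]/(M + XR[X]) \cong R/M$ is a field, $M + XR[X]$ is maximal in $R[X]$, hence prime; its constant terms lie in $M$, so it is disjoint from $\overline{A}$. Maximality of $Q$ among such primes gives $Q = M + XR[X]$, so $\mathfrak{Q} = (M + XR[X])_A$. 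The reverse assignment $M \mapsto (M + XR[X])_A$ yields a maximal ideal of $R[X]_A$ by the same containment argument, and it is injective: any $r \in R$ belonging to $(M + XR[X])_A$ satisfies $h(rg - f) = 0$ in $R[X]$ for some $f \in M + XR[X]$ and $g, h \in \overline{A}$, and then evaluating $hrg = hf$ at $0$ yields $h(0) g(0) r \in M$, which forces $r \in M$ since $h(0), g(0) \in R^\times$.

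The step demanding most care is the reduction from an arbitrary prime $Q$ disjoint from $\overline{A}$ to one of the form $M + XR[X]$ — namely, verifying that $\pi(Q)$ is proper and choosing $M$ to absorb it. Everything else is routine manipulation of primes under localization and contraction.
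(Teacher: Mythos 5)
Your proof is correct and follows essentially the same route as the paper's: pass to primes of $R[X]$ disjoint from $A$ (equivalently, with proper constant-term image), and in the maximal case absorb that image into a maximal ideal $M$ of $R$ so that the contraction is forced to equal $M+XR[X]$. The only substantive addition is your explicit verification that $M\mapsto (M+XR[X])_A$ is injective via $(M+XR[X])_A\cap R=M$, a detail the paper leaves implicit in asserting the one-to-one correspondence.
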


\begin{proof}
(1) Let $P$ be a minimal prime ideal of $R$.
Then $PR[X]$ is a minimal prime ideal of $R[X]$ disjoint from $A$,
so $PR[X]_A$ is a minimal prime ideal of $R[X]_A$.
Now, suppose that $\mathfrak{p}$ be a minimal prime ideal of $R[X]_A$.
Then there exists a minimal prime ideal $\mathcal{P}$ of $R[X]$ such that $\mathfrak{p} = \mathcal{P}R[X]_A$.
Note that every minimal prime ideal of $R[X]$ is the extension of a minimal prime ideal of $R$.
Thus $\mathfrak{p}= PR[X]_A$ for some minimal prime ideal $P$ of $R$.

(2) Let $M$ be a maximal ideal of $R$.
As $M + XR[X]$ is a maximal ideal of $R[X]$ disjoint from $A$,
$(M + XR[X])_A$ is a maximal ideal of $R[X]_A$.
Now, suppose that $\mathfrak{m}$ is a maximal ideal of $R[X]_A$.
Then there exists a prime ideal $\mathfrak{p}$ of $R[X]$ disjoint from $A$
such that $\mathfrak{m} = \mathfrak{p}R[X]_A$.
Also, it is easy to check that $I := \{f(0) \,|\, f \in \mathfrak{p} \}$
is a proper ideal of $R$.
Hence there is a maximal ideal $M$ of $R$ which containing $I$,
which means that $\mathfrak{p} \subseteq M+ XR[X]$.
As $\mathfrak{p}R[X]_A$ is a maximal ideal of $R[X]_A$,
$\mathfrak{m} = \mathfrak{p}R[X]_A = (M+XR[X])_A$.
Consequently,
every maximal ideal of $R[X]_A$ is of the form $(M+XR[X])_A$
for some maximal ideal $M$ of $R$.
\end{proof}

\subsection{Results from Theorem \ref{maximal cor}}

By Theorem \ref{maximal cor},
we can derive several results.
In this subsection, we discuss some of the results from Theorem \ref{maximal cor}.

\begin{corollary}\label{quasi-local}
Let $R$ be a commutative ring with identity. Then
$R$ is a semi-quasi-local ring if and only if $R[X]_A$ is a semi-quasi-local ring.
In particular, $R$ is a quasi-local ring if and only if $R[X]_A$ is a quasi-local ring.
\end{corollary}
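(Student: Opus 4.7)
The plan is to reduce everything to the bijection established in Theorem \ref{maximal cor}(2). Recall that a ring is called semi-quasi-local when it has only finitely many maximal ideals, and quasi-local when it has exactly one maximal ideal. Thus both equivalences are purely statements about the cardinality of ${\rm Max}(R)$ versus ${\rm Max}(R[X]_A)$.

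First I would invoke Theorem \ref{maximal cor}(2), which gives the explicit description
\[
{\rm Max}(R[X]_A) = \{(M+XR[X])_A \,|\, M \in {\rm Max}(R)\}.
\]
To turn this into a genuine cardinality statement I need the assignment $M \mapsto (M+XR[X])_A$ to be injective. This is a routine check: if $(M_1+XR[X])_A = (M_2+XR[X])_A$, then contracting both ideals to $R$ recovers $M_1$ and $M_2$ respectively (indeed, the contraction of $(M+XR[X])_A$ to $R$ is $M$, since a fraction $r/f$ with $f \in A$ lies in $R$ precisely when $r$ is a multiple of a unit of $R$ modulo $M$). Hence the map is a bijection.

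With this bijection in hand the corollary is immediate. For the first assertion, ${\rm Max}(R)$ is finite if and only if ${\rm Max}(R[X]_A)$ is finite, which is exactly the semi-quasi-local equivalence. For the second, $|{\rm Max}(R)| = 1$ if and only if $|{\rm Max}(R[X]_A)| = 1$, giving the quasi-local equivalence.

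I do not expect any real obstacle: the only point that needs a line of justification is that the parametrization in Theorem \ref{maximal cor}(2) is actually one-to-one (as opposed to merely surjective onto ${\rm Max}(R[X]_A)$), and this is already asserted in part (2) of that theorem. So the proof will be essentially a one-sentence appeal to Theorem \ref{maximal cor}(2).
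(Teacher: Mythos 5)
Your proof is correct and follows exactly the route the paper intends: the corollary is stated without proof as an immediate consequence of the one-to-one correspondence in Theorem \ref{maximal cor}(2), and your extra verification that $M \mapsto (M+XR[X])_A$ is injective (by contracting back to $R$, which recovers $M$ since $g(0)$ is a unit for $g \in A$; cf.\ Lemma \ref{ideal contraction}) is the right way to make the cardinality argument airtight.
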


Recall that a commutative ring $R$ with identity is said to have {\it finite character} if
every nonzero nonunit element belongs to only finitely many maximal ideals of $R$.

\begin{proposition}\label{finite character}
Let $R$ be a commutative ring with identity.
Then $R[X]_A$ is of finite character if and only if $R$ is a semi-quasi-local ring.
\end{proposition}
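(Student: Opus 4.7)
The plan is to exploit Theorem \ref{maximal cor}(2) to translate finite character of $R[X]_A$ into a cardinality condition on $\mathrm{Max}(R)$. The key observation is that the indeterminate $X$, viewed inside $R[X]_A$, is a nonzero nonunit element that simultaneously lies in \emph{every} maximal ideal of $R[X]_A$: by Theorem \ref{maximal cor}(2), each maximal ideal of $R[X]_A$ has the form $(M+XR[X])_A$ with $M \in \mathrm{Max}(R)$, and $X$ manifestly belongs to $M+XR[X]$. Once this is noted, the theorem does most of the work.

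For the forward direction, assume $R[X]_A$ has finite character. If $R$ is the zero ring the statement is trivial, so assume $R \neq 0$; then $X$ is nonzero in $R[X]_A$, and it is a nonunit because it lies in some maximal ideal of $R[X]_A$ (equivalently, because $X(0)=0$ is not a unit in $R$, so $X \notin \overline{A}$). By finite character, $X$ lies in only finitely many maximal ideals of $R[X]_A$, but by the observation above it lies in \emph{all} of them; hence $\mathrm{Max}(R[X]_A)$ is finite. Applying Theorem \ref{maximal cor}(2) then gives that $\mathrm{Max}(R)$ is finite, i.e.\ $R$ is semi-quasi-local.

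For the converse, assume $R$ is semi-quasi-local, so $\mathrm{Max}(R)$ is finite. Theorem \ref{maximal cor}(2) yields that $\mathrm{Max}(R[X]_A)$ is also finite, so $R[X]_A$ is semi-quasi-local. Any semi-quasi-local ring trivially has finite character: every element belongs to at most finitely many maximal ideals simply because the whole set $\mathrm{Max}(R[X]_A)$ is finite. This also immediately yields the ``in particular'' clause: if $R[X]_A$ has finite character then $R$ is semi-quasi-local, and every semi-quasi-local ring has finite character.

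There is essentially no obstacle beyond verifying that $X$ is a nonzero nonunit of $R[X]_A$ (which is straightforward from the definition of $A$) and invoking Theorem \ref{maximal cor}(2); the appeal to $X$ as a ``universal witness'' lying in every maximal ideal is what makes the forward direction collapse to the semi-quasi-local case rather than yielding only finite character of $R$.
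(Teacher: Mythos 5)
Your proof is correct and follows essentially the same route as the paper's: both arguments hinge on the observation that $X$ is a nonzero nonunit of $R[X]_A$ lying in every maximal ideal $(M+XR[X])_A$, so finite character forces ${\rm Max}(R[X]_A)$, and hence ${\rm Max}(R)$, to be finite via Theorem \ref{maximal cor}(2). The only cosmetic difference is that the paper frames the forward direction as a contradiction routed through Corollary \ref{quasi-local}, whereas you argue directly.
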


\begin{proof}
Suppose that $R[X]_A$ is of finite character.
Note that $X$ is a nonzero nonunit of $R[X]_A$ and
$X$ is contained in $(M+XR[X])_A$ for all $M \in {\rm Max}(R)$,
which means that ${\rm Max}(R)$ is a finite set by Theorem \ref{maximal cor}(2).
Thus $R$ is a semi-quasi-local ring.
The converse directly follows from Corollary \ref{quasi-local}.
\end{proof}

Now, we investigate the Krull dimension of the Anderson ring.
The next result shows that the Krull dimension of the Anderson ring is
always equal to the Krull dimension of the polynomial rings,
and this result is very useful in this paper.

\begin{proposition}\label{dimension}
Let $R$ be a commutative ring with identity.
If $R$ is finite dimensional, then $\dim(R[X]) = \dim(R[X]_A)$,
and hence $n+1 \leq \dim(R[X]_A) \leq 2n+1$ if $\dim(R) = n$.
\end{proposition}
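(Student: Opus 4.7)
The plan is to apply Theorem \ref{maximal cor}(2) to identify the maximal ideals of $R[X]_A$, express $\dim(R[X]_A)$ as a supremum of heights of specific ideals in $R[X]$, and then construct a chain of maximum length contained in a single such ideal.

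Since $R[X]_A$ is a localization of $R[X]$, the inequality $\dim(R[X]_A) \leq \dim(R[X])$ is immediate: any chain of primes in $R[X]_A$ lifts to a chain of the same length in $R[X]$. For the reverse, by Theorem \ref{maximal cor}(2), every maximal ideal of $R[X]_A$ has the form $(M + XR[X])_A$ for some $M \in {\rm Max}(R)$. Any prime $\mathfrak{p}$ of $R[X]$ contained in $M + XR[X]$ has every element with constant term in $M$, so $1 \notin \{f(0) : f \in \mathfrak{p}\}$ and $\mathfrak{p} \cap A = \emptyset$. Hence the primes of $R[X]_A$ contained in $(M + XR[X])_A$ correspond, height-preservingly, to the primes of $R[X]$ contained in $M + XR[X]$, giving
\[
\dim(R[X]_A) \;=\; \sup_{M \in {\rm Max}(R)} {\rm ht}_{R[X]}(M + XR[X]).
\]
It remains to show this supremum equals $\dim(R[X])$.

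Given any chain $P_0 \subsetneq P_1 \subsetneq \cdots \subsetneq P_d$ of primes in $R[X]$ with $d = \dim(R[X])$, I produce a chain of length $d$ inside $M + XR[X]$ for some $M \in {\rm Max}(R)$. Choose $M$ to contain $P_d \cap R$. The construction exploits two observations: (i) for every prime $p \subseteq M$ of $R$, the prime $p[X]$ is contained in $M + XR[X]$ automatically; and (ii) by the fiber structure of ${\rm Spec}(R[X]) \to {\rm Spec}(R)$ (primes over a fixed $p$ consist of $p[X]$ together with pairwise-incomparable uppers), each upper $P_i$ appearing at a non-top level of the chain is forced by $P_i \subsetneq P_{i+1}$ either into a level prime $(P_{i+1} \cap R)[X] \subseteq M + XR[X]$ or into another prime already in $M + XR[X]$. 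The topmost prime can always be replaced by $M + XR[X]$ itself without reducing the length, since the penultimate prime is strictly contained in $M + XR[X]$. The main obstacle is the case analysis verifying that the resulting chain is strictly increasing and of length exactly $d$; finite-dimensionality of $R$ bounds all chains via Seidenberg, keeping everything finite. The second statement then follows immediately from the equality $\dim(R[X]_A) = \dim(R[X])$ combined with Seidenberg's classical inequalities $n+1 \leq \dim(R[X]) \leq 2n+1$ when $\dim(R) = n$.
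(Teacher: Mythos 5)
Your overall route is the same as the paper's: both arguments reduce to the single claim that $\dim(R[X])$ is attained as the height of an ideal of the form $M+XR[X]$ with $M\in{\rm Max}(R)$, after which the localization correspondence (every prime of $R[X]$ contained in $M+XR[X]$ is disjoint from $A$, and $(M+XR[X])_A$ is maximal in $R[X]_A$) together with Seidenberg's bounds finishes the proof. The paper simply asserts this claim (``Note that there exists a maximal ideal $M$ of $R$ such that $\dim(R[X])={\rm rank}(M+XR[X])$''), whereas you attempt to prove it --- and that is exactly where your argument has a gap.

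Your observation (ii) is not correct as stated: if $P_i\subsetneq P_{i+1}$ and $P_i$ is an upper to $p_i=P_i\cap R$, it does not follow that $P_i$ lies inside the level prime $(P_{i+1}\cap R)[X]$, nor that it already lies inside $M+XR[X]$. For instance, in $\mathbb{Z}[X]$ the chain $(0)\subsetneq(3X-1)\subsetneq(2,X+1)$ has top prime contracting to $2\mathbb{Z}=M$, yet $3X-1$ belongs to neither $2\mathbb{Z}[X]$ nor $(2,X)=M+X\mathbb{Z}[X]$; the upper must be \emph{replaced} (here by $2\mathbb{Z}[X]$), and verifying that such replacements can be carried out consistently down the whole chain --- precisely the ``case analysis'' you defer as ``the main obstacle'' --- is the entire content of the claim, not a routine check. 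The clean way to close the gap is Jaffard's special chain theorem: for a prime $Q$ of $R[X]$ with $q=Q\cap R$ one has ${\rm ht}(Q)={\rm ht}(q[X])+{\rm ht}(Q/q[X])$, and every maximal ideal $Q$ of $R[X]$ is an upper to $q$ (since $(R/q)[X]$ is never a field), so ${\rm ht}(Q)={\rm ht}(q[X])+1\leq{\rm ht}(M[X])+1={\rm ht}(M+XR[X])$ for any $M\in{\rm Max}(R)$ containing $q$; this yields $\dim(R[X])=\sup_{M\in{\rm Max}(R)}{\rm ht}(M+XR[X])$ at once. With that (or an honest completion of your chain surgery) supplied, your proof is correct and coincides with the paper's.
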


\begin{proof}
Note that there exists a maximal ideal $M$ of $R$ such that
$\dim(R[X]) = {\rm rank}(M+XR[X])$.
As $(M+XR[X])_A$ is a maximal ideal of $R[X]_A$,
we obtain
\begin{eqnarray*}
\dim(R[X]_A) &=& {\rm rank}((M + XR[X])_A)\\
&=&{\rm rank}(M+XR[X]) \\
&=& \dim(R[X]).
\end{eqnarray*}
The remaining argument directly follows from \cite[Theorem 2]{seidenberg 1953}.
\end{proof}

\begin{remark}\label{dimension remark}
{\rm
Let $R$ be a commutative ring with identity.

(1) As $R[X]$ is never zero-dimensional,
$R[X]_A$ is also never zero-dimensional by Proposition \ref{dimension}.

(2) Recall that $n+1 \leq \dim(R[X]) \leq 2n+1$ if $\dim(R) = n$
\cite[Theorem 2]{seidenberg 1953}; and
if $R$ is an $n$-dimensional Noetherian ring,
then $\dim(R[X]) = n+1$ \cite[Theorem 9]{seidenberg 1953}.
Using the above fact and Proposition \ref{dimension},
we obtain the following facts.
\begin{enumerate}
\item[(i)]
$R$ is zero-dimensional if and only if $R[X]_A$ is one-dimensional.
In this case, ${\rm Spec}(R[X]_A) = \{MR[X]_A \,|\, M \in {\rm Max}(R)\} \cup
\{(M+XR[X])_A \,|\, M \in {\rm Max}(R)\}$ by Theorem \ref{maximal cor}.
In particular,
if $R$ is an integral domain,
then $R$ is a field if and only if
$R[X]_A$ is one-dimensional.
\item[(ii)]
If $R$ is a Noetherian ring,
then $\dim(R[X]_A) = \dim(R) + 1$.
\end{enumerate}

(3) Remind that $R$ is an Artinian ring if and only if
$R$ is a zero-dimensional Noetherian ring \cite[Theorem 8.5]{atiyah book};
and $D$ is a Dedekind domain if and only if
$D$ is a one-dimensional integrally closed Noetherian domain \cite[Theorem 5.2.15]{wang book}.
Hence $R[X]_A$ is never an Artinain ring by (1),
and $R$ is a field if and only if $R[X]_A$ is a Dedekind domain by (2).

(4) Note that $R[X]_N$ and $R[X]_U$ coincide if and only if
$\dim(R) = 0$ \cite[Theorem 17.11]{huckaba 1988}.
Since $\dim(R[X]_N) = \dim(R[X])-1 = \dim(R[X]_U)$
\cite[Theorem 17.3 and Corollary 17.4]{riche 1980},
$R[X]_A$ never coincides with $R[X]_N$ and $R[X]_U$ by Proposition \ref{dimension}.
}
\end{remark}

Throughout this paper,
we denote
$A_P = \{f \in R_P[X] \,|\, f(0) \text{ is a unit in $R_P$}\}$
for any prime ideal $P$ of $R$.
As there is a one-to-one correspondence between
the maximal ideals of $R$ and the maximal ideals of $R[X]_A$,
we derive some local properties of $R[X]_A$.
The next result is a useful tool
for investigating the local properties of the Anderson ring.

\begin{lemma}\label{R_P[X]_A_P}
Let $R$ be a commutative ring with identity.
Then the following statements hold.
\begin{enumerate}
\item[(1)]
The saturation of $A$ is the set of polynomials whose constant term is a unit in $R$.
\item[(2)]
If $\overline{A}$ is the saturation of $A$,
then $\overline{A} = R[X] \setminus\bigcup_{M \in {\rm Max}(R)}(M + XR[X])$.
\item[(3)]
If $R$ is quasi-local with maximal ideal $M$, then $R[X]_A = R[X]_{M+XR[X]}$.
\item[(4)]
For a prime ideal $P$ of $R$,
$R_P[X]_{A_P} = (R[X]_A)_{(P+XR[X])_A}$.
\end{enumerate}
\end{lemma}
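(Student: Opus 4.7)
The four parts form a short sequence where each builds on the previous one, so I would handle them in order and let them cascade.

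For (1), my plan is to unwind the definition of saturation, namely that $\overline{A} = \{f \in R[X] : fg \in A \text{ for some } g \in R[X]\}$. If $f(0)$ is a unit with inverse $v \in R$, then multiplying $f$ by the constant polynomial $v$ yields a polynomial whose constant term equals $1$, so $f \in \overline{A}$. Conversely, if $fg \in A$, evaluating at $0$ gives $f(0)g(0) = 1$, so $f(0)$ is a unit. For (2), the key observation is that $f \in M + XR[X]$ if and only if $f(0) \in M$: the implication $(\Rightarrow)$ comes from evaluating at $0$, and $(\Leftarrow)$ comes from writing $f = f(0) + X(\frac{f - f(0)}{X})$. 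Taking complements across all maximal ideals, $f$ lies outside every $M + XR[X]$ iff $f(0)$ lies outside every maximal ideal of $R$, i.e., $f(0)$ is a unit; by (1) this is exactly $\overline{A}$.

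For (3), I simply specialize (2) to the quasi-local case: the only maximal ideal is $M$, so $\overline{A} = R[X] \setminus (M + XR[X])$, which is precisely the multiplicative set complementary to the prime $M + XR[X]$. Since localizing at a multiplicative set and at its saturation produce the same ring, $R[X]_A = R[X]_{\overline{A}} = R[X]_{M + XR[X]}$.

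For (4), my strategy is to chain localizations and then invoke (3) applied to $R_P$. First, since $A$ is contained in the multiplicative complement of the prime $P + XR[X]$ of $R[X]$, transitivity of localization gives $(R[X]_A)_{(P + XR[X])_A} \cong R[X]_{P + XR[X]}$. Second, I want to rewrite the right-hand side in terms of $R_P[X]$. Noting that $R \setminus P$ is disjoint from $P + XR[X]$ (a constant polynomial in $P + XR[X]$ must lie in $P$), another application of transitivity gives $R[X]_{P + XR[X]} \cong R_P[X]_{PR_P + XR_P[X]}$. Finally, $R_P$ is quasi-local with maximal ideal $PR_P$, so part (3) applied to $R_P$ yields $R_P[X]_{A_P} = R_P[X]_{PR_P + XR_P[X]}$, and stringing the three isomorphisms together gives the desired conclusion.

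The whole lemma is essentially bookkeeping once (1) and (2) are in place, so I do not anticipate a substantive obstacle; the most error-prone step is verifying the disjointness conditions needed to apply transitivity of localization in (4), in particular that $R \setminus P$ and $P + XR[X]$ have empty intersection in $R[X]$, which should be checked explicitly rather than assumed.
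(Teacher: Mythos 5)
Your proposal is correct and follows essentially the same route as the paper: (1) and (2) by direct computation with constant terms, (3) as the quasi-local specialization of (2), and (4) by chaining localizations through $R[X]_{P+XR[X]}$ and applying (3) to $R_P$. You simply fill in more detail (the saturation computation in (1) and the disjointness checks in (4)) than the paper, which records the same chain of equalities.
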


\begin{proof}
(1) This result is obvious.

(2) Let $f \in \overline{A}$.
Then $f(0)$ is a unit in $R$
by the assertion (1).
This implies that $f(0) \in R \setminus \bigcup_{M \in {\rm Max}(R)}M$,
and hence $f \in R[X] \setminus \bigcup_{M \in {\rm Max}(R)}(M+XR[X])$.
For the reverse containment,
let $f \in R[X] \setminus \bigcup_{M \in {\rm Max}(R)}(M+XR[X])$.
Then $f(0) \in R \setminus \bigcup_{M \in {\rm Max}(R)}M$.
This follows that $f(0)$ is a unit in $R$.
Thus $f \in \overline{A}$.

(3) The result follows directly from the assertion (2).

(4) Note that $R_P$ is a quasi-local ring with maximal ideal $PR_P$.
Thus we have
\begin{eqnarray*}
R_P[X]_{A_P} &=& R_P[X]_{PR_P+XR_P[X]}\\
&=& R[X]_{P+XR[X]}\\
&=& (R[X]_A)_{(P+XR[X])_A},
\end{eqnarray*}
where the first equality follows directly from the assertion (3).
\end{proof}

Let $R$ be a commutative ring with identity.
Recall that $R$ is a {\it locally Noetherian ring} if
$R_M$ is a Noetherian ring for all $M \in {\rm Max}(R)$.
The following result is a representative local property of the Anderson ring.

\begin{proposition}\label{locally Noetherian}
Let $R$ be a commutative ring with identity.
Then the following statements are equivalent.
\begin{enumerate}
\item[\rm(1)]
$R$ is a locally Noetherian ring.
\item[\rm(2)]
$R[X]_A$ is a locally Noetherian ring.
\end{enumerate}
\end{proposition}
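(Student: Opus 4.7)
The plan is to reduce the statement to a question about the local rings $R_M$ via the two key lemmas that are already in hand: Theorem \ref{maximal cor}(2) identifies the maximal spectrum of $R[X]_A$, and Lemma \ref{R_P[X]_A_P}(4) identifies the corresponding localizations. Combining these, $R[X]_A$ is locally Noetherian if and only if $R_M[X]_{A_M}$ is Noetherian for every $M \in \mathrm{Max}(R)$, and by Lemma \ref{R_P[X]_A_P}(3) the latter ring is nothing but $R_M[X]_{MR_M + XR_M[X]}$, the localization of the polynomial ring $R_M[X]$ at its maximal ideal $MR_M + XR_M[X]$.

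For the direction (1)$\Rightarrow$(2), I would simply observe that if $R$ is locally Noetherian, then for each $M \in \mathrm{Max}(R)$ the ring $R_M$ is Noetherian, so by the Hilbert basis theorem $R_M[X]$ is Noetherian, and hence its localization $R_M[X]_{MR_M + XR_M[X]}$ is Noetherian. By the reduction above, this gives that $R[X]_A$ is locally Noetherian.

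For the converse (2)$\Rightarrow$(1), the key observation is that $R_M$ is a homomorphic image of $R_M[X]_{MR_M + XR_M[X]}$: evaluation at $X = 0$ defines a surjective ring homomorphism $R_M[X] \twoheadrightarrow R_M$ whose kernel is $XR_M[X] \subseteq MR_M + XR_M[X]$, so it extends to a surjection
\begin{equation*}
R_M[X]_{MR_M + XR_M[X]} \twoheadrightarrow R_M
\end{equation*}
(the image of $MR_M + XR_M[X]$ is $MR_M$, which is already invertible in $R_M$, so nothing obstructs the extension). Since quotients of Noetherian rings are Noetherian, the hypothesis that $R[X]_A$ is locally Noetherian forces $R_M$ to be Noetherian for each $M \in \mathrm{Max}(R)$, i.e. $R$ is locally Noetherian.

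Neither direction presents a genuine obstacle once the reduction via Theorem \ref{maximal cor} and Lemma \ref{R_P[X]_A_P} is in place; the only step that requires a moment's care is verifying in the converse direction that evaluation at $X=0$ indeed factors through the localization, which follows because the kernel $XR_M[X]$ lies inside the prime $MR_M + XR_M[X]$ (so elements outside that prime map to units in $R_M$).
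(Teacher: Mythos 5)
Your proposal is correct, and its skeleton --- reducing everything to the local rings via Theorem \ref{maximal cor}(2) and Lemma \ref{R_P[X]_A_P}(4), then handling $R_M[X]_{A_M}$ directly --- is the same as the paper's. The one place you diverge is the converse direction. The paper first establishes the global fact ``$R$ is Noetherian if and only if $R[X]_A$ is Noetherian'' by passing through the Nagata ring: $(R[X]_A)_N = R[X]_N$, and $R[X]_N$ is Noetherian if and only if $R$ is; it then applies this to $R_M$. You instead observe that evaluation at $X=0$ induces a surjection $R_M[X]_{MR_M+XR_M[X]} \twoheadrightarrow R_M$ (legitimate, since any $g$ outside $MR_M+XR_M[X]$ has $g(0)$ a unit in the local ring $R_M$), so Noetherianity descends to the quotient. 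Your route is more self-contained --- it avoids citing the equivalence for Nagata rings and uses only the universal property of localization plus the fact that quotients of Noetherian rings are Noetherian --- and in fact the same surjection $R[X]_A \twoheadrightarrow R$, $f/g \mapsto f(0)/g(0)$, exists globally, which would shorten the paper's preliminary observation as well. The paper's route has the mild advantage of recycling a standard fact about $R(X)$ that it needs elsewhere. Both arguments are complete.
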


\begin{proof}
As $(R[X]_A)_N = R[X]_N$ and
$R$ is a Noetherian ring if and only if $R[X]$ is a Noetherian ring
if and only if $R[X]_N$ is a Noetherian ring,
we obtain the fact that
$R$ is a Noetherian ring if and only if $R[X]_A$ is a Noetherian ring.

(1) $\Rightarrow$ (2)
Let $\mathfrak{m}$ be a maximal ideal of $R[X]_A$.
Then by Theorem \ref{maximal cor}(2), there exists a maximal ideal $M$ of $R$
such that $\mathfrak{m} = (M + XR[X])_A$.
Since $R_M$ is Noetherian,
$R_M[X]_{A_M}$ is also Noetherian.
This implies that $(R[X]_A)_{\mathfrak{m}}=(R[X]_A)_{(M + XR[X])_A}$
is a Noetherian ring by Lemma \ref{R_P[X]_A_P}(4).
Thus $R[X]_A$ is a locally Noetherian ring.

(2) $\Rightarrow$ (1)
Let $M$ be a maximal ideal of $R$.
Then $(M + XR[X])_A$ is a maximal ideal of $R[X]_A$ by Theorem \ref{maximal cor}(2).
Hence $(R[X]_A)_{(M + XR[X])_A}$ is Noetherian,
which implies that $R_M[X]_{A_M}$ is Noetherian by Lemma \ref{R_P[X]_A_P}(4).
Thus $R_M$ is a Noetherian ring.
Consequently, $R$ is a locally Noetherian ring.
\end{proof}

Note that every locally Noetherian ring of finite character is a Noetherian ring
\cite[Section 7, Exercise 9]{atiyah book}.
Thus by Propositions \ref{finite character} and \ref{locally Noetherian}, we obtain

\begin{corollary}\label{locally Noetherian with finite character}
Let $R$ be a commutative ring with identity.
If $R[X]_A$ is a locally Noetherian ring with finite character,
then $R$ is a semi-quasi-local Noetherian ring.
\end{corollary}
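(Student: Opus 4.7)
The plan is to derive this corollary as an immediate consequence of the two preceding results, so the proof is essentially a bookkeeping exercise rather than a substantive argument. I would proceed by assuming the hypothesis and extracting the two conclusions separately.

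First, suppose $R[X]_A$ is a locally Noetherian ring. Then by the equivalence (2) $\Rightarrow$ (1) of Proposition \ref{locally Noetherian}, $R$ is a locally Noetherian ring. This handles the "locally Noetherian" half with no extra work.

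Second, suppose $R[X]_A$ has finite character. Then Proposition \ref{finite character} applies directly to tell us that $R$ is a semi-quasi-local ring, which in particular means $R$ has only finitely many maximal ideals and is therefore trivially of finite character. (Even the weaker conclusion \emph{``$R[X]_A$ of finite character implies $R$ of finite character''} stated inside Proposition \ref{finite character} suffices.) Combining the two conclusions gives the desired statement.

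I do not anticipate any obstacle here: the whole content of the corollary is already packaged inside the two cited propositions, so the only task is to remark on the combination. A single sentence of the form ``The result follows by combining Propositions \ref{finite character} and \ref{locally Noetherian}'' would actually suffice, and I would present the proof in essentially that form, perhaps spelling out which implication of each proposition is being used for the reader's convenience.
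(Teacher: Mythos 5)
Your proposal is correct and matches the paper exactly: the paper offers no separate proof, simply introducing the corollary with ``By Propositions \ref{finite character} and \ref{locally Noetherian}, we obtain,'' which is precisely the combination you describe. Your explicit unpacking of which implication of each proposition is used is accurate and adds nothing that would change the argument.
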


Similarly to Proposition \ref{locally Noetherian},
we can obtain a lot of local properties of the Anderson ring.

\begin{remark}
{\rm
Let $R$ be a commutative ring with identity.
Let (P) be a property which satisfies that
$R$ has a property (P) if and only if $R[X]_A$ has a property (P).
Then we obtain that
$R_M$ has a property (P) for all $M \in {\rm Max}(R)$ if and only if
$(R[X]_A)_{\mathfrak{m}}$ has a property (P) for all $\mathfrak{m} \in {\rm Max}(R[X]_A)$.
}
\end{remark}

Let $D$ be an integral domain, $M$ a maximal ideal of $D$ and
let $N_M = \{f \in D_M[X] \,|\, c(f) = D_M\}$.
Recall that $D[X]_N = \bigcap_{M \in {\rm Max}(D)}D_M[X]_{N_M}$
\cite[Proposition 2.9]{kang 1989}.
Similarly, we obtain the following result,
which shows that
$D[X]_A$ can be expressed as the intersection of
quasi-local Anderson rings.

\begin{proposition}
Let $D$ be an integral domain.
Then $D[X]_A = \bigcap_{M \in {\rm Max}(D)}D_M[X]_{A_M}$.
\end{proposition}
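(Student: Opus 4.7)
The plan is to identify $D[X]_A$ with the intersection of its localizations at its maximal ideals, and then translate that intersection into the desired form by invoking Theorem \ref{maximal cor}(2) and Lemma \ref{R_P[X]_A_P}(4). The argument is essentially a packaging of these two earlier results together with the standard representation of a domain as the intersection of its localizations at maximal ideals.

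To set this up, I would first observe that since $D$ is a domain, so are $D[X]$ and $D[X]_A$, and I fix their common quotient field $K := {\rm Frac}(D[X])$. Each of the rings $(D[X]_A)_{\mathfrak{m}}$ for $\mathfrak{m} \in {\rm Max}(D[X]_A)$, and each $D_M[X]_{A_M}$ for $M \in {\rm Max}(D)$, is a localization of a domain at a multiplicative subset, hence embeds canonically as a subring of $K$. Thus the intersection $\bigcap_{M \in {\rm Max}(D)} D_M[X]_{A_M}$ is well defined literally (not merely up to isomorphism) as a subring of $K$.

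Next I would invoke the standard fact that for any integral domain $T$ with quotient field $K$, one has $T = \bigcap_{\mathfrak{m} \in {\rm Max}(T)} T_{\mathfrak{m}}$ inside $K$. Applied to $T = D[X]_A$, this gives $D[X]_A = \bigcap_{\mathfrak{m} \in {\rm Max}(D[X]_A)} (D[X]_A)_{\mathfrak{m}}$. Theorem \ref{maximal cor}(2) tells me that the map $M \mapsto (M + XD[X])_A$ is a bijection from ${\rm Max}(D)$ onto ${\rm Max}(D[X]_A)$, so I may reindex the intersection over $M \in {\rm Max}(D)$. Finally, Lemma \ref{R_P[X]_A_P}(4) (applied with $P = M$) identifies $(D[X]_A)_{(M+XD[X])_A}$ with $D_M[X]_{A_M}$, and stringing these equalities together yields $D[X]_A \cong \bigcap_{M \in {\rm Max}(D)} D_M[X]_{A_M}$.

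The only genuine obstacle is bookkeeping: I need to verify that the isomorphism supplied by Lemma \ref{R_P[X]_A_P}(4) coincides with the natural inclusion of both sides into $K$, so that the final equality lives inside one common overring and is literal rather than only up to abstract isomorphism. This is automatic, however, because every ring in sight is realized as a localization of $D[X]$ at a multiplicative subset of $D[X]$, and the isomorphism of Lemma \ref{R_P[X]_A_P}(4) is then nothing more than the identification of two such localizations as subrings of $K$. With that observation in hand, no further work is required.
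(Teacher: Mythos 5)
Your proposal is correct and follows essentially the same route as the paper: the paper likewise writes $D[X]_A = \bigcap_{M \in {\rm Max}(D)} (D[X]_A)_{(M+XD[X])_A}$ (citing the standard intersection-of-localizations fact), reindexes via Theorem \ref{maximal cor}(2), and applies Lemma \ref{R_P[X]_A_P}(4). Your extra remark that all the rings involved are localizations of $D[X]$ inside a common quotient field is a reasonable clarification but not a departure from the paper's argument.
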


\begin{proof}
By Theorem \ref{maximal cor}(2) and Lemma \ref{R_P[X]_A_P}(4),
we obtain
\begin{eqnarray*}
D[X]_A &=& \bigcap_{M \in {\rm Max}(D)}(D[X]_A)_{(M + XD[X])_A}\\
&=& \bigcap_{M \in {\rm Max}(D)} D_M[X]_{A_M},
\end{eqnarray*}
where the first equality follows from
\cite[Theorem 4.10(3)]{gilmer book}.
\end{proof}

Let $R$ be a commutative ring with identity and
let $D$ be an integral domain with quotient field $K$. Recall that
\begin{itemize}
\item $D$ is a {\it G-domain} if $K$ is a finitely generated ring over $D$,
\item a prime ideal $P$ of $R$ is  a {\it G-ideal} if $R/P$ is a G-domain, and
\item $R$ is a {\it Hilbert ring} if every G-ideal is maximal.
\end{itemize}
In \cite{brewer 1980}, Brewer and Heinzer showed that if $R$ is a Noetherian ring,
then $R[X]_U$ is a Hilbert ring,
and in \cite{anderson 1985}, the authors showed that
$R[X]_N$ is a Hilbert ring if and only if
$R$ is a Hilbert ring and
${\rm Spec}(R[X]_N) = \{PR[X]_N \,|\, P \in {\rm Spec}(R)\}$ if and only if
$R$ is a Hilbert ring and $\overline{R/P}$ is
a Pr\"ufer domain for any minimal prime ideal $P$ of $R$,
where $\overline{R/P}$ is the integral closure of $R/P$.
We conclude this section with the following result, which provides a useful tool for constructing many examples of rings that are not Hilbert rings.

\begin{proposition}\label{Hilbert ring}
Let $R$ be a commutative ring with identity.
Then $R[X]_A$ is never a Hilbert ring.
\end{proposition}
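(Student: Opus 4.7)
The plan is to exhibit an element of $R[X]_A$ which lies in the Jacobson radical but is not nilpotent; recalling that in any Hilbert (Jacobson) ring the intersection of all maximal ideals coincides with the nilradical (every prime is an intersection of maximal ideals, so intersecting over all primes gives the same result as intersecting over all maximals), this immediately disqualifies $R[X]_A$. The natural witness is $X$ itself, and both properties needed will be read off essentially for free from Theorem \ref{maximal cor}(2) and the constant-term description of $A$.

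For the first step I would invoke Theorem \ref{maximal cor}(2) to write every maximal ideal of $R[X]_A$ as $(M+XR[X])_A$ with $M\in{\rm Max}(R)$, and then note the trivial inclusion $X\in XR[X]\subseteq M+XR[X]$, so $X$ lies in every maximal ideal of $R[X]_A$. For the second step I would argue that $X$ is not nilpotent in the localization: if $X^n=0$ in $R[X]_A$, there would be some $f\in A$ with $fX^n=0$ in $R[X]$, but since $f(0)=1$ the polynomial $f$ has the monomial $1$, so $fX^n$ has nonzero coefficient $1$ in degree $n$, a contradiction.

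Combining the two steps, $X$ belongs to $J(R[X]_A)$ but not to the nilradical, so $J(R[X]_A)\neq\sqrt{(0)}$, and therefore $R[X]_A$ cannot be a Hilbert ring. The only conceptual caveat is the trivial case $R=0$ (in which $R[X]_A$ is the zero ring and the argument degenerates), but since the paper works with nonzero rings throughout this is not a genuine obstacle; once Theorem \ref{maximal cor}(2) is in hand the rest is immediate, so there is no technical hurdle to speak of.
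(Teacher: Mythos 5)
Your proof is correct and rests on the same key fact as the paper's: by Theorem \ref{maximal cor}(2) every maximal ideal of $R[X]_A$ has the form $(M+XR[X])_A$, so $X$ lies in the Jacobson radical, which is incompatible with the Hilbert property. The paper closes the argument by showing that an extended prime $PR[X]_A$ cannot equal the intersection of the maximal ideals containing it (since that intersection contains $X$, forcing $1\in P$), while you instead contradict the identity $J(R[X]_A)=\sqrt{(0)}$ by checking that $X$ is not nilpotent (using $f(0)=1$ for $f\in A$); this is the same idea in a slightly different, equally valid package.
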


\begin{proof}
Suppose to the contrary that $R[X]_A$ is a Hilbert ring.
Let $P$ be a prime ideal of $R$.
Then $PR[X]_A$ is a prime ideal of $R[X]_A$.
Hence $PR[X]_A$ can be expressed
as an intersection of the maximal ideals of $R[X]_A$
properly containing $PR[X]_A$
\cite[Theorem 31.8]{gilmer book}.
Let $\{M_{\alpha} \,|\, \alpha \in \Lambda \}$
be the set of maximal ideals of $R$ properly containing $P$.
Then by Theorem \ref{maximal cor}(2),
$PR[X]_A = \bigcap_{\alpha \in \Lambda}( (M_{\alpha}+XR[X])_A)$.
This follows that $X \in PR[X]_A$.
This contradicts to the fact that $1 \notin P$.
Thus $R[X]_A$ is never a Hilbert ring.
\end{proof}

\section{$R[X]_A$ over von Neumann regular rings}\label{sec 3}

Let $R$ be a commutative ring with identity.
Recall that $R$ is a {\it von Neumann regular ring} if
for any $a \in R$,
there exists an element $b \in R$ such that
$a^2b = a$.
A useful fact is that
$R$ is a von Neumann regular ring if and only if
$R$ is a zero-dimensional reduced ring,
which is also equivalent to
$R_P$ being a field for all $P \in {\rm Spec}(R)$
\cite[Theorem 3.6.16]{wang book}
(recall that a reduced ring is a ring that has no nonzero nilpotent elements).
In this section, we investigate the Anderson ring whose
base ring is a von Neumann regular ring.

First, we examine the condition on $R$ under which
$R[X]_A$ becomes a principal ideal ring (for short, PIR).
Note that if $R$ is a PIR, then $\dim(R) \leq 1$,
so $R[X]_A$ is one-dimensional
whenever $R[X]_A$ is a PIR
by Remark \ref{dimension remark}(1).
In \cite{riche 1980},
Le Riche showed that $R$ is a PIR if and only if
$R[X]_U$ is a PIR, and
in \cite{anderson 1985},
the authors showed that
$R$ is a PIR if and only if $R[X]_N$ is a ZPI ring.
Base on the above facts, we can naturally consider the question `When is $R[X]_A$ a PIR?'.
The following result is the answer to the above question which is the first main result of this section.

\begin{theorem}\label{PIR 2}
Let $R$ be a commutative ring with identity.
Then the following are equivalent.
\begin{enumerate}
\item[\rm(1)]
$R$ is both a von Neumann regular ring and a PIR.
\item[\rm(2)]
$R[X]$ is a one-dimensional PIR.
\item[\rm(3)]
$R[X]_A$ is a one-dimensional PIR.
\end{enumerate}
\end{theorem}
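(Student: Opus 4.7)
The plan is to establish the cycle $(1) \Rightarrow (2) \Rightarrow (3) \Rightarrow (1)$, with the substantive work concentrated in the last implication.

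For $(1) \Rightarrow (2)$, I would combine the characterization of a von Neumann regular ring as zero-dimensional and reduced with the Noetherianity forced by the PIR hypothesis: $R$ is then a reduced Artinian ring, so by the Artinian structure theorem $R \cong k_1 \times \cdots \times k_n$ for fields $k_i$. Hence $R[X] \cong k_1[X] \times \cdots \times k_n[X]$ is a finite product of PIDs, which is a one-dimensional PIR.

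For $(2) \Rightarrow (3)$, I would observe that $R[X]_A$ is a localization of $R[X]$, so it inherits the PIR property, and Proposition \ref{dimension} gives $\dim(R[X]_A) = \dim(R[X]) = 1$.

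For $(3) \Rightarrow (1)$, the plan is to reduce to the local case and generalize the computation in Example \ref{PIR counter example}. By Proposition \ref{PIR 1}, $R$ is a zero-dimensional PIR, hence Artinian, and therefore $R \cong R_1 \times \cdots \times R_k$ with each $R_i$ a special principal ideal ring (SPIR). Since $A$ splits as $A_1 \times \cdots \times A_k$ under the induced decomposition $R[X] \cong \prod_i R_i[X]$, we obtain $R[X]_A \cong \prod_i R_i[X]_{A_i}$, and each factor inherits the PIR property. It then suffices to prove each $R_i$ is a field. Supposing, for a contradiction, that some $R_0 := R_i$ has maximal ideal $(a)$ with $a^n = 0$, $a^{n-1} \neq 0$, and $n \geq 2$, Corollary \ref{quasi-local} says $R_0[X]_{A_0}$ is quasi-local with unique maximal ideal $\mathfrak{m} := (aR_0[X] + XR_0[X])_{A_0}$, which the PIR hypothesis forces to be principal, say $\mathfrak{m} = (f)$ with $f = c_0 + c_1 X + \cdots \in R_0[X]$. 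From $a \in (f)$ and evaluation at $X = 0$, $c_0$ must divide $a$ in $R_0$; the case that $c_0$ is a unit is ruled out, since it would make $f$ a unit in $R_0[X]_{A_0}$ and hence $\mathfrak{m}$ the whole ring. Using the SPIR structure (every nonzero element of $R_0$ is a unit times a power of $a$, and $a \notin (a^2)$), after rescaling $f$ by a unit one may assume $c_0 = a$. Then from $X \in (f)$, writing $Xh = fg$ with $h \in A_0$ and $g = d_0 + d_1 X + \cdots$, the constant term yields $a d_0 = 0$, whence $d_0 \in \mathrm{ann}(a) = (a^{n-1})$; the coefficient of $X$ then yields $1 = a d_1 + c_1 d_0 \in (a)$ because $n \geq 2$, contradicting that $a$ is a nonunit.

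The main obstacle is this SPIR-level computation, which carries the real content ruling out non-reduced zero-dimensional PIRs; the Artinian product reduction and the observation that the Anderson construction commutes with finite products (because $A = \prod A_i$ under $R[X] = \prod R_i[X]$) are routine verifications.
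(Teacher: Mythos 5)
Your proposal is correct, and while the overall skeleton $(1)\Rightarrow(2)\Rightarrow(3)\Rightarrow(1)$ matches the paper, the substance of $(3)\Rightarrow(1)$ is genuinely different. The paper, after invoking Proposition \ref{PIR 1} to get that $R$ is a zero-dimensional PIR, works globally: it picks an arbitrary nonzero nilpotent $a$, takes a principal generator $f$ of $((a)+XR[X])_A$, and runs a minimality argument on the nilpotency index of $f(0)$ (using the relations $f(0)b_0=0$ and $f(0)b_1+a_1b_0=1$ coming from $X\in fR[X]_A$) to force $f(0)=0$, then uses the element $\frac{a}{X+1}$ to conclude $a=0$. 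You instead push the zero-dimensional PIR through the Artinian structure theorem, check that the Anderson construction commutes with finite products (since $A=\prod A_i$ under $R[X]\cong\prod R_i[X]$), and rule out a nontrivial SPIR factor by an explicit computation with the uniformizer: $ad_0=0$ forces $d_0\in(a)$, so the degree-one relation $1=ad_1+c_1d_0$ puts $1\in(a)$. Your local computation is essentially the paper's Example \ref{PIR counter example} for $\mathbb{Z}_4$ promoted to a general SPIR, and it is arguably cleaner arithmetic; the cost is the extra (routine but nontrivial) reductions via the structure theorem and the product compatibility of $(-)[X]_A$, which the paper's global nilpotent argument avoids. Your $(1)\Rightarrow(2)$ via ``reduced Artinian $=$ finite product of fields'' is a mild streamlining of the paper's route through the Zariski--Samuel decomposition of PIRs into PIDs and special PIRs. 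One cosmetic point: after the Artinian decomposition you should allow the factors $R_i$ to be fields as well as SPIRs with nonzero maximal ideal (the paper's definition of special PIR requires a nonzero nilpotent maximal ideal), but since you only argue by contradiction in the non-field case this does not affect the proof.
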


\begin{proof}
If $R$ is an integral domain,
then the result holds obviously.
Hence suppose that $R$ is not an integral domain.

(1) $\Rightarrow$ (2)
As $R[X]$ is one-dimensional \cite[Theorem 2]{seidenberg 1953},
it is sufficient to show that $R[X]$ is a PIR.
Note that every PIR can be expressed as a direct sum of
PIDs and special PIRs \cite[Chapter IV, Theorem 33]{zariski book}
(recall that a special PIR is a quasi-local ring which has nonzero nilpotent maximal ideal).
Since every special PIR contains nonzero nilpotent and $R$ is reduced,
we obtain that $R = D_1\oplus \cdots \oplus D_m$,
where $D_1,\dots,D_m$ are PIDs.
Since an ideal $I$ of $R$ to be a prime ideal,
all components except exactly one are the entire rings,
we obtain that
the prime ideals of $R$ are correspondence to the prime ideals of some $D_i$.
This follows that if there exists $1 \leq i \leq m$ such that $D_i$ is not a field,
then $R$ is one-dimensional.
As $R$ is zero-dimensional, $R$ can be expressed as a direct sum of fields.
This follows that $R[X]$ can be expressed as a direct sum of PIDs,
and thus $R[X]$ is a one-dimensional PIR.

(2) $\Rightarrow$ (3) The result is obvious.

(3) $\Rightarrow$ (1)
As $R$ is a zero-dimensional PIR,
it is sufficient to show that $R$ is a reduced ring \cite[Theorem 3.6.16]{wang book}.
Suppose to the contrary that there exists
a nonzero element $a \in R \setminus \{0\}$ such that
$a^n = 0$ for some $n \geq 2$.
Let $I = (a)$.
As $(I + XR[X])_A$ is principal,
there exists $f := \sum_{i=1}^{s} a_iX^i \in I+XR[X]$
such that $(I + XR[X])_A = fR[X]_A$.
This follows that $a_0 \in I$, so $a_0^n = 0$.
Let $k$ be the smallest positive integer satisfying $a_0^k = 0$
and suppose to the contrary that $k \geq 2$.
Since $X \in fR[X]_A$,
there exist polynomials $g:= \sum_{i=1}^{t} b_iX^i \in R[X]$ and $h \in A$
such that $X = f \frac{g}{h}$.
Hence $a_0b_0 = 0$ and $a_0b_1+a_1b_0 = 1$.
Multiplying the second equation by $a_0$,
we obtain the equation $a_0^2b_1 = a_0$.
This equation implies that $a_0^{k-1} = a_0^kb_1 = 0$.
This contradicts to the minimality of $k$.
This implies that $a_0 = 0$.
Now, consider the element $\frac{a}{X+1} \in (I+XR[X])_A = fR[X]_A$.
Then $\frac{a}{X+1} = f \frac{g_1}{h_1}$ for some $g_1 \in R[X]$ and $h_1 \in A$.
This implies the equation $a = ah_1(0) = a_0g_1(0) = 0$,
so $a = 0$, which contradicts our choice of $a$.
Thus $R$ is a zero-dimensional reduced PIR.
\end{proof}

Note that $n$ is a square-free positive integer if and only if
$\mathbb{Z}_n$ is a von Neumann regular ring.
Also, it is clear that $\mathbb{Z}_n$ is a PIR.
Hence we can obtain the following result from Theorem \ref{PIR 2}.

\begin{corollary}
Let $n$ be a positive integer and
$R = \mathbb{Z}_n$.
Then $R[X]_A$ is a PIR if and only if
$n$ is square free.
\end{corollary}

Let $R$ be a commutative ring with identity and
let $M$ be an $R$-module.
If $M$ has a flat resolution
$
\begin{tikzcd}[
   column sep=0.80em,
  ]
0\arrow[r]
  &F_n
  \arrow[r]
  &F_{n-1}
  \arrow[r]
  &\cdots
  \arrow[r]
  &F_1
  \arrow[r]
  &F_0
  \arrow[r]
  &M,
\end{tikzcd}
$
then we say the {\it flat dimension} of $M$ is at most $n$.
If $n$ is the smallest such integer,
then we define the flat dimension of $M$ is $n$,
and denoted by ${\rm fd}_R(M) = n$.
If there is no finite flat resolution of $M$,
then define ${\rm fd}_R(M) = \infty$.
Also, the {\it weak global dimension} of $R$, and
denoted by w.gl.$\dim(R)$,
is defined by
w.gl.$\dim(R) = \sup\{{\rm fd}_R(M) \,|\, M \text{ is an $R$-module}\}$.
Recall that $R$ is a {\it Pr\"ufer domain}
if $R$ is an integral domain and w.gl.$\dim(R) \leq 1$.
Pr\"ufer domains are characterized by many equivalent conditions.
Many of these conditions have been extended to the case of rings with zero-divisors
and gave rise to at least six classes of Pr\"ufer-like rings, namely:
\begin{enumerate}
\item[(1)]
$R$ is a {\it semi-hereditary ring}
if every finitely generated ideal of $R$ is projective.
\item[(2)]
$R$ is an {\it arithmetical ring}
if every finitely generated ideal of $R$ is locally principal.
\item[(3)]
$R$ is a {\it Gaussian ring}
if $c(fg) = c(f)c(g)$ for any $f,g \in R[X]$.
\item[(4)]
$R$ is a {\it locally Pr\"ufer ring}
if $R_P$ is a Pr\"ufer ring for any $P \in {\rm Spec}(R)$.
\item[(5)]
$R$ is a {\it maximally Pr\"ufer ring}
if $R_M$ is a Pr\"ufer ring for any $M \in {\rm Max}(R)$.
\item[(6)]
$R$ is a {\it Pr\"ufer ring}
if every finitely generated regular ideal is invertible.
\end{enumerate}
In \cite{boynton 2011, glaz 2005, klingler 2015},
the authors proved the implications
(1) $\Rightarrow$ $w.{\rm gl}.\dim(R) \leq 1$ $\Rightarrow$
(2) $\Rightarrow$ (3) $\Rightarrow$ (4) $\Rightarrow$ (5) $\Rightarrow$ (6).

In 1985, the authors showed that
$D[X]_N$ is a Pr\"ufer ring if and only if
$D$ is a strongly Pr\"ufer ring, and
$D[X]_U$ is a Pr\"ufer ring if and only if
$D$ is a strongly Pr\"ufer ring with $\dim(R) \leq 1$ and
if $P \subsetneq Q$ are prime ideals of $R$, then $R_P$ is a field \cite{anderson 1985}.
Also, in 2018, Jarrar and Kabbaj
found the conditions of $R$
under which $R[X]_U$ or $R[X]_N$ becomes such Pr\"ufer-like rings
\cite{jarrar 2018}.
Hence a natural question arises:
When is $R[X]_A$ a Prüfer-like ring?
To answer this question,
we need some facts about the ideal extension to the Anderson ring.

\begin{lemma}\label{number of generator}
Let $R$ be a commutative ring with identity
and let $I$ be an ideal of $R$.
Suppose that $\alpha$ is any cardinal number.
Then $I$ is generated by $\alpha$-elements
if and only if $IR[X]_A$ is generated by $\alpha$-elements.
In particular, $I$ is finitely generated $($respectively, principal$)$
if and only if $IR[X]_A$ is finitely generated $($respectively, principal$)$.
\end{lemma}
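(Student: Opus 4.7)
The forward direction will be immediate: if $I = (a_\lambda)_{\lambda \in \Lambda}$ with $|\Lambda| \le \alpha$, then the images of the $a_\lambda$'s in $R[X]_A$ generate $IR[X]_A$, simply because extension of ideals respects generating sets. So all the content is in the converse direction.

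For the converse, I would start by choosing a generating set $\{b_\lambda\}_{\lambda \in \Lambda}$ of $IR[X]_A$ with $|\Lambda| \le \alpha$, writing each $b_\lambda = i_\lambda/g_\lambda$ with $i_\lambda \in IR[X]$ and $g_\lambda \in A$. Since $g_\lambda$ is a unit in $R[X]_A$, the polynomials $\{i_\lambda\}_{\lambda \in \Lambda}$ form a generating set of the same cardinality, now entirely inside $IR[X]$. I then propose to show that the constant terms $\{i_\lambda(0)\}_{\lambda \in \Lambda} \subseteq I$ already generate $I$ as an $R$-ideal, which completes the argument since there are at most $\alpha$ of them.

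To verify this claim is the technical heart of the proof. One containment is obvious, since $i_\lambda(0) \in I$ whenever $i_\lambda \in IR[X]$. For the other, given $a \in I \subseteq IR[X]_A$, I would express $a$ as a finite $R[X]_A$-linear combination of the $i_\lambda$'s, clear the $A$-denominators to obtain an identity of the form $ka = \sum_{j} q_j\, i_{\lambda_j}$ in $R[X]_A$ with $k \in A$ and $q_j \in R[X]$, and then invoke the definition of equality in the localization to produce $s \in A$ with
$$s\Bigl(ka - \sum_{j} q_j\, i_{\lambda_j}\Bigr) = 0 \quad \text{in } R[X].$$
The decisive observation is that every element of $A$ has constant term exactly $1$ (not merely a unit), so evaluating this equation at $X = 0$ gives $a = \sum_{j} q_j(0)\, i_{\lambda_j}(0)$, expressing $a$ as an $R$-linear combination of the proposed generators. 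The only potential pitfall is a bookkeeping one, namely being careful about when identities live in $R[X]$ versus $R[X]_A$ and remembering to introduce the auxiliary $s \in A$ before specialising at $X = 0$. The ``in particular'' statements about finitely generated and principal ideals then follow by taking $\alpha$ to be a finite cardinal, respectively $1$.
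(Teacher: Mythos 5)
Your proposal is correct and follows essentially the same route as the paper: reduce to a generating set lying in $IR[X]$, express an element of $I$ as a finite combination, clear denominators, and evaluate at $X=0$ using the fact that every element of $A$ has constant term exactly $1$. If anything you are slightly more careful than the paper, which passes from the identity in $R[X]_A$ directly to an identity in $R[X]$ without explicitly introducing the auxiliary multiplier $s\in A$; since elements of $A$ need not be non-zero-divisors this step is genuinely needed, and it is harmless precisely because $s(0)=1$.
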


\begin{proof}
It is clear that if $I$ is generated by $\alpha$-elements,
then $IR[X]_A$ is generated by $\alpha$-elements.
For the converse,
let $|\Lambda| = \alpha$ and
suppose that $\{f_j \in IR[X] \,|\, j \in \Lambda \}$ is a generating set of $IR[X]_A$.
We claim that $\{f_j(0) \,|\, j \in \Lambda \}$
is a generating set of $I$.
Let $i \in I$.
Then there exist $\alpha_1, \dots, \alpha_n \in \Lambda$,
$g_1, \dots, g_n \in R[X]$
and $h_1, \dots, h_n \in A$ such that
$i = \sum_{j=1}^{n} f_{\alpha_j}\frac{g_j}{h_j}$.
Let $h = h_1 \cdots h_n$ and let $\hat{h}_j = \frac{h}{h_j}$.
Then $ih = \sum_{j=1}^{n}f_{\alpha_j}g_j\hat{h}_j$.
This equation implies that $i=\sum_{j=1}^{n}f_{\alpha_j}(0)g_j(0)$.
Hence $\{f_j(0) \,|\, j \in \Lambda \}$
is a generating set of $I$.
Thus the first argument holds.
The remainder argument follows directly from this result. 
\end{proof}

Readers should note that in Lemma \ref{number of generator},
$\alpha$ may be assumed to represent
the number of minimal generators of $I$ and $IR[X]_A$.

In \cite{anderson 1985},
the authors showed that
if $R$ is an integral domain and
$I$ is an ideal of $R$,
then $IR[X]_U$ is principal if and only if
$I$ is principal.
The following example demonstrates that
in Nagata rings,
Lemma \ref{number of generator} does not hold in general.

\begin{example}
{\rm
Suppose that $D$ is a Pr\"ufer domain which is not a B\'ezout domain.
Then there is a finitely generated ideal of $D$ which is not a principal, say $I$.
Note that $D[X]_N$ is a B\'ezout domain
\cite[Corollary 7]{lim 2014}.
As $ID[X]_N$ is finitely generated,
$ID[X]_N$ is principal.
}
\end{example}

Now, we investigate the invertibility properties of the Anderson ring.

\begin{proposition}\label{locally principal and invertible}
Let $R$ be a commutative ring with identity and let $I$ be an ideal of $R$.
Then the following assertions hold.
\begin{enumerate}
\item[\rm(1)]
$I$ is locally principal if and only if $IR[X]_A$ is locally principal.
\item[\rm(2)]
$IR[X]_A$ is invertible in $R[X]_A$ if and only if
$I$ is finitely generated locally principal with ${\rm ann}(I) = (0)$.
In particular, if $I$ is regular,
then $I$ is invertible in $R$ if and only if
$IR[X]_A$ is invertible in $R[X]_A$.
\end{enumerate}
\end{proposition}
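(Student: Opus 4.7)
The plan is to prove (1) by combining Theorem \ref{maximal cor}(2), Lemma \ref{R_P[X]_A_P}(4), and Lemma \ref{number of generator}; and to prove (2) by invoking the standard characterization that an ideal $J$ of a commutative ring is invertible if and only if $J$ is finitely generated, locally principal, and satisfies ${\rm ann}(J) = 0$. Once this characterization is granted, (2) reduces to transferring each of the three conditions between $I$ and $IR[X]_A$.

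For (1), let $M \in {\rm Max}(R)$ and set $\mathfrak{m} = (M + XR[X])_A$. By Theorem \ref{maximal cor}(2), the assignment $M \mapsto \mathfrak{m}$ is a bijection onto ${\rm Max}(R[X]_A)$, and by Lemma \ref{R_P[X]_A_P}(4) we have $(R[X]_A)_{\mathfrak{m}} \cong R_M[X]_{A_M}$, under which the extension of $IR[X]_A$ corresponds to $(IR_M)R_M[X]_{A_M}$. Applying Lemma \ref{number of generator} with base ring $R_M$ and ideal $IR_M$, this localization is principal if and only if $IR_M$ is principal. Since every maximal ideal of $R[X]_A$ arises in this way, local principality passes back and forth.

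For (2), given (1) and Lemma \ref{number of generator}, only the annihilator condition needs attention. The implication ${\rm ann}_{R[X]_A}(IR[X]_A) = 0 \Rightarrow {\rm ann}_R(I) = 0$ is immediate: any $r \in R$ killing $I$ kills $IR[X]_A$, hence is zero in $R[X]_A$, so $rh = 0$ in $R[X]$ for some $h \in A$; evaluating at $0$ and using $h(0) = 1$ forces $r = 0$. For the converse, write $I = (i_1, \ldots, i_n)$ using finite generation, and suppose $\frac{f}{g} \in R[X]_A$ annihilates $IR[X]_A$. For each $k$ there exists $h_k \in A$ with $h_k f i_k = 0$ in $R[X]$; then $h := h_1 \cdots h_n \in A$ satisfies $hfI = 0$, so every coefficient of $hf$ lies in ${\rm ann}_R(I) = 0$, giving $hf = 0$ and hence $\frac{f}{g} = 0$. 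The ``in particular'' clause follows because a regular ideal of $R$ automatically has zero annihilator, so that for such an ideal invertibility in $R$ is equivalent to being finitely generated and locally principal, which matches the criterion for $IR[X]_A$.

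The main technical obstacle is the coefficient-wise annihilator argument in (2): without the finite-generation hypothesis on $I$, one cannot consolidate the $h_k$'s into a single $h \in A$, and the equivalence between ${\rm ann}_R(I) = 0$ and ${\rm ann}_{R[X]_A}(IR[X]_A) = 0$ would fail. Once finite generation is in hand, however, the proof reduces to a routine gluing of the already-established lemmas.
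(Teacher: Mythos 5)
Part (1) of your proposal is exactly the paper's argument (Theorem \ref{maximal cor}(2) $+$ Lemma \ref{R_P[X]_A_P}(4) $+$ Lemma \ref{number of generator}), and the ``only if'' half of (2) is sound, though the paper instead gets it by localizing further to $R[X]_N$ and citing \cite{anderson 1985}. Your coefficient argument identifying ${\rm ann}_{R[X]_A}(IR[X]_A)=0$ with ${\rm ann}_R(I)=0$ is also correct.

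The gap is in the ``if'' direction of (2), where everything is made to rest on the claimed ``standard characterization'' that an ideal $J$ is invertible if and only if it is finitely generated, locally principal, and ${\rm ann}(J)=(0)$. That is not the standard statement, and it is the wrong criterion here: if $JJ^{-1}=R$ with $J^{-1}\subseteq T(R)$, then clearing a common regular denominator in an equation $1=\sum j_it_i$ shows $J$ must contain a \emph{regular element} (a non-zero-divisor), which is strictly stronger than having zero annihilator in a ring without Property A. The correct standard fact is that a finitely generated, locally principal, \emph{regular} ideal is invertible; a finitely generated faithful locally principal ideal is only rank-one projective (invertible as a module), and without a regular element one cannot identify ${\rm Hom}(J,R)$ with $J^{-1}$ inside $T(R)$. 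So your proof never establishes that $IR[X]_A$ actually meets the total quotient ring correctly, i.e.\ that it is a regular ideal. This is precisely the step the paper supplies: since $I$ is finitely generated with ${\rm ann}(I)=(0)$, the McCoy-type result \cite[Chapter I, Exercise 2(iii)]{atiyah book} shows that $IR[X]$ contains a regular polynomial (one whose coefficients generate $I$), hence $IR[X]_A$ is regular, and only then does ``finitely generated $+$ locally principal $+$ regular $\Rightarrow$ invertible'' apply. The upgrade from ``dense'' to ``regular'' is exactly where passing to the polynomial ring does real work, and it is missing from your argument.
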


\begin{proof}
(1) Suppose that $I$ is a locally principal ideal of $R$.
Let $\mathfrak{m}$ be a maximal ideal of $R[X]_A$.
By Theorem \ref{maximal cor}(2),
there exists a maximal ideal $M$ of $R$
such that $\mathfrak{m} = (M + XR[X])_A$.
As $IR_M$ is principal, we obtain that
$IR_M[X]_{A_M}$ is principal by Lemma \ref{number of generator}.
Hence $(IR[X]_A)_{\mathfrak{m}}$ is principal
by Lemma \ref{R_P[X]_A_P}(4),
which means that $IR[X]_A$ is locally principal.
For the converse, suppose that $IR[X]_A$ is locally principal.
Let $M$ be a maximal ideal of $R$.
As $(M+XR[X])_A$ is a maximal ideal of $R[X]_A$,
$IR_M[X]_{A_M}=(IR[X]_A)_{(M+XR[X])_A}$ is principal
by Lemma \ref{R_P[X]_A_P}(4).
It follows that $IR_M$ is principal
by Lemma \ref{number of generator}.
Consequently, $I$ is locally principal.

(2) The only if part directly follows from
the fact that $R[X]_N$ is the quotient ring of $R[X]_A$ by $N$
and \cite[Theorem 2.2(5)]{anderson 1985}.
For the converse, suppose that $I$ is
finitely generated locally principal with ${\rm ann}(I) = (0)$.
Then $IR[X]_A$ is finitely generated locally principal by
the assertion (1) and Lemma \ref{number of generator}.
Also, as ${\rm ann}(I) = (0)$,
$IR[X]$ is regular \cite[Chapter I, Exercise 2(iii)]{atiyah book}.
It follows that $IR[X]_A$ is regular.
Thus $IR[X]_A$ is invertible in $R[X]_A$.
The remainder argument is obvious.
\end{proof}

Let $R$ be a commutative ring with identity and
let $T(R)$ be the total quotient ring of $R$.
Recall that an integral domain $D$ is a {\it valuation domain}
if for any nonzero elements $a,b \in D$,
either $(a) \subseteq (b)$ or $(b) \subseteq (a)$.
A well-known fact is that every quasi-local PID is a valuation domain.
Note that $R$ is a semi-hereditary ring if and only if
$T(R)$ is a von Neumann regular ring and
$R_P$ is a valuation domain for all $P \in {\rm Spec}(R)$ \cite[Theorem 2]{endo 1961}.
The next result is the second main result of this section which determines the condition on $R$ under which
$R[X]_A$ becomes Pr\"ufer-like ring.

\begin{theorem}\label{von, Prufer-like}
Let $R$ be a commutative ring with identity.
Then the following assertions are equivalent.
\begin{enumerate}
\item[(1)]
$R$ is a von Neumann regular ring.
\item[(2)]
$R[X]_A$ is a semi-hereditary ring.
\item[(3)]
{\rm w.gl.}$\dim(R[X]_A) \leq 1$.
\item[(4)]
$R[X]_A$ is an arithmetical ring.
\item[(5)]
$R[X]_A$ is a Gaussian ring.
\item[(6)]
$R[X]_A$ is a locally Pr\"ufer ring.
\item[(7)]
$R[X]_A$ is a maximally Pr\"ufer ring.
\item[(8)]
$R[X]_A$ is a Pr\"ufer ring.
\end{enumerate}
In this case, $\dim(R[X]_A) = 1$.
\end{theorem}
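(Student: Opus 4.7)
My plan is to close the cycle $(1) \Rightarrow (2) \Rightarrow (3) \Rightarrow (4) \Rightarrow (5) \Rightarrow (6) \Rightarrow (7) \Rightarrow (8) \Rightarrow (1)$. The chain $(2) \Rightarrow (3) \Rightarrow (4) \Rightarrow (5) \Rightarrow (6) \Rightarrow (7) \Rightarrow (8)$ is the standard list of Pr\"ufer-like implications valid in any commutative ring, as recorded in Boynton, Glaz, and Klingler and cited in the excerpt. Hence the substantive work is $(1) \Rightarrow (2)$ and $(8) \Rightarrow (1)$, after which the trailing assertion $\dim(R[X]_A) = 1$ is immediate from (1), Proposition \ref{dimension}, and Remark \ref{dimension remark}(2).

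For $(1) \Rightarrow (2)$, the plan is to verify Endo's criterion (cited just before the theorem): $R[X]_A$ is semi-hereditary iff $T(R[X]_A)$ is von Neumann regular and $(R[X]_A)_{\mathfrak p}$ is a valuation domain for every $\mathfrak p \in {\rm Spec}(R[X]_A)$. Since $R$ is von Neumann regular, $\dim R = 0$ and every $R_M$ is a field; Proposition \ref{dimension} then forces $\dim R[X]_A = 1$, so every prime of $R[X]_A$ is either minimal or maximal. Theorem \ref{maximal cor} together with Lemma \ref{R_P[X]_A_P}(4) identifies the localization at a maximal $(M + XR[X])_A$ with $R_M[X]_{A_M} = R_M[X]_{(X)}$, a DVR, while the localization at a minimal prime $PR[X]_A$ collapses to the rational function field $R_P(X)$; both are valuation domains. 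For the $T(R[X]_A)$ condition I will use that $R[X]_A$ is reduced with $w.{\rm gl}.\dim(R[X]_A) \leq w.{\rm gl}.\dim(R[X]) = w.{\rm gl}.\dim(R) + 1 = 1$, which together with the valuation-domain localizations forces $T(R[X]_A)$ to be von Neumann regular.

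The delicate direction is $(8) \Rightarrow (1)$. Assuming $R[X]_A$ is Pr\"ufer, Corollary \ref{Prufer ring 1} gives that $R$ is Pr\"ufer, and the task is to upgrade this to $R$ being both zero-dimensional and reduced. The common tactic for both sub-claims is to localize at a maximal $(M + XR[X])_A$ via Lemma \ref{R_P[X]_A_P}(4) to land in $R_M[X]_{A_M}$, and then to exhibit a finitely generated regular ideal there which cannot be principal — contradicting the quasi-local Pr\"ufer property. For reducedness I will adapt the two-case substitution argument of Example \ref{PIR counter example}: for any nonzero nilpotent $a$ contained in a maximal $M$, the ideal $(MR_M + XR_M[X])_{A_M}$ cannot be principal. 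For zero-dimensionality, once $R$ is reduced, I pick a regular non-unit $a \in M$ (available whenever $\dim R \geq 1$) and show $(X, a)(R_M[X]_{A_M})$ cannot be principal by substituting $X = 0$ and $X = -a$ into putative generator equations. The main obstacle is the zero-dimensionality step: the test ideal must be selected so that no Noetherian hypothesis is invoked, and the sub-arguments must be ordered so that reducedness is in hand before it is used to produce the regular element $a$.
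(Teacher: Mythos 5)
Your architecture (the cycle $(1)\Rightarrow(2)\Rightarrow\cdots\Rightarrow(8)\Rightarrow(1)$ with the middle implications quoted from the Pr\"ufer-like hierarchy) matches the paper, and your treatment of the localizations in $(1)\Rightarrow(2)$ is essentially the paper's. But there are two genuine gaps. First, in $(1)\Rightarrow(2)$ you propose to deduce that $T(R[X]_A)$ is von Neumann regular from ``$w.{\rm gl}.\dim(R[X]_A)\leq 1$ together with the valuation-domain localizations.'' This cannot work: $w.{\rm gl}.\dim \leq 1$ is \emph{equivalent} to all localizations at primes being valuation domains, and the whole point of Endo's criterion is that semi-hereditary is strictly stronger than this --- the von Neumann regularity of the total quotient ring is precisely the extra hypothesis that must be verified independently. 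The paper gets it directly: since $R$ is reduced, every $f$ with $c(f)=R$ is regular in $R[X]$, so $R[X]_N$ sits inside $T(R[X]_A)$; as $\dim(R[X]_N)=0$ when $\dim(R)=0$ and $T(R[X]_A)$ is a reduced localization of it, $T(R[X]_A)$ is zero-dimensional reduced, hence von Neumann regular.

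Second, in $(8)\Rightarrow(1)$ your zero-dimensionality step picks ``a regular non-unit $a\in M$ (available whenever $\dim R\geq 1$).'' That parenthetical is false: there exist reduced rings of positive Krull dimension that equal their own total quotient ring, so no such $a$ need exist, and your argument stalls exactly where you flagged the obstacle. The fix --- and the paper's actual argument --- is to make the \emph{ideal} regular rather than the element: for an arbitrary $m\in M$ take $I=(m)+XR[X]$, which is regular because it contains $X$. Pr\"uferness makes $IR[X]_A$ invertible, hence $(\frac{m}{1},\frac{X}{1})R_M[X]_{A_M}$ is principal in the quasi-local ring $R_M[X]_{A_M}$; by comparability of the two generators (\cite[Proposition 7.4]{gilmer book}) and the easy check that $\frac{X}{1}\notin \frac{m}{1}R_M[X]_{A_M}$, one gets $\frac{m}{1}\in \frac{X}{1}R_M[X]_{A_M}$, and evaluating constant terms gives $\frac{m}{1}=0$ in $R_M$. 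Thus $MR_M=(0)$ and $R_M$ is a field for every $M$, which yields von Neumann regularity in one stroke --- no separate reducedness stage, no appeal to Example \ref{PIR counter example}, and no regular element of $R$ required. Your two-stage plan is not merely longer; as written its second stage rests on a premise that can fail.
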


\begin{proof}
(1) $\Rightarrow$ (2)
Denote the total quotient ring of $R[X]_A$ by $T(R[X]_A)$.
Suppose that $R$ is a von Neumann regular ring, {\it i.e.},
a zero-dimensional reduced ring.
It is sufficient to show that
$T(R[X]_A)$ is a von Neumann regular ring and
$(R[X]_A)_{\mathfrak{p}}$ is a valuation domain
for all $\mathfrak{p} \in {\rm Spec}(R[X]_A)$ \cite[Theorem 2]{endo 1961}.
As $R[X]_N$ is a overring of $R[X]_A$ and $\dim(R[X]_N) = 0$,
$\dim(T(R[X]_A)) = 0$.
Also, it is clear that $T(R[X]_A)$ is reduced,
so $T(R[X]_A)$ is a von Neumann regular ring.
Now, we claim that 
$(R[X]_A)_{\mathfrak{p}}$ is a valuation domain
for all $\mathfrak{p} \in {\rm Spec}(R[X]_A)$.
As $R$ is zero-dimensional,
every prime ideals of $R[X]_A$ is of the form $MR[X]_A$ or $(M+XR[X])_A$
for some $M \in {\rm Max}(R)$ by Remark \ref{dimension remark}(2).
Note that $R_M$ is a field for all $M \in {\rm Max}(R)$
\cite[Theorem 3.6.16]{wang book}.
This implies that $(R[X]_A)_{MR[X]_A} = R_M[X]_{N_M}$ is a field,
where $N_M = \{f \in R_M[X] \,|\, c(f) = R_M\}$
(cf. \cite[Proposition 5.5.10]{wang book}).
On the other hand, by Corollary \ref{quasi-local}
and Theorem \ref{PIR 2},
$(R[X]_A)_{(M + XR[X])_A} = R_M[X]_{A_M}$ is a quasi-local PID.
Thus our claim holds.
Consequently, $R[X]_A$ is a semi-hereditary ring.

We already mentioned the implications
(2) $\Rightarrow$ (3) $\Rightarrow$
(4) $\Rightarrow$ (5) $\Rightarrow$ (6) $\Rightarrow$ (7) $\Rightarrow$ (8) hold.

(8) $\Rightarrow$ (1)
Suppose that $R[X]_A$ is a Pr\"ufer ring.
It is sufficient to show that $R_M$ is a field for all $M \in {\rm Max}(R)$
\cite[Theorem 3.6.16]{wang book}.
Let $M$ be a maximal ideal of $R$,
$m \in M$ and $I = (m) + XR[X]$.
As $I$ is regular, $IR[X]_A$ is invertible.
Hence $(\frac{m}{1}R_M + \frac{X}{1}R_M[X])_{A_M}$ is principal.
It is easy to check that $\frac{X}{1}R_M[X]_{A_M}$ is not contained in $\frac{m}{1}R_M[X]_{A_M}$,
which follows that $\frac{m}{1}R_M[X]_{A_M} \subseteq \frac{X}{1}R_M[X]_{A_M}$
\cite[Proposition 7.4]{gilmer book}.
This implies that $\frac{m}{1}R_M = (0)$.
Since $m$ is an arbitrary element of $M$,
$MR_M = (0)$.
Thus $R_M$ is a field.
Consequently, $R$ is a von Neumann regular ring.
\end{proof}

Recall that an integral domain $D$ is a {\it B\'ezout domain} if
every finitely generated ideal is principal.
Based on the results obtained so far in this section,
we can derive the following conclusions.

\begin{corollary}\label{sec 3 domain}
Let $D$ be an integral domain.
Then the following assertions are equivalent.
\begin{enumerate}
\item[(1)]
$D$ is a field.
\item[(2)]
$D[X]_A$ is a PID.
\item[(3)]
$D[X]_A$ is a valuation domain.
\item[(4)]
$D[X]_A$ is a B\'ezout domain.
\item[(5)]
$D[X]_A$ is a Pr\"ufer domain.
\end{enumerate}
\end{corollary}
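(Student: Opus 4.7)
The plan is to run a short cycle of implications, with the bulk of the work already absorbed by Theorems \ref{PIR 2} and \ref{von, Prufer-like}. The equivalence $(1) \Leftrightarrow (5)$ is the heart of the matter: Theorem \ref{von, Prufer-like} says $D[X]_A$ is a Pr\"ufer ring if and only if $D$ is von Neumann regular, and for an integral domain von Neumann regularity forces $D$ to be a field (since $D$ embeds in every localization $D_P$, each of which must be a field). This pins down the extremes of the chain, and it remains to sandwich $(2)$, $(3)$, $(4)$ in between.

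For $(1) \Rightarrow (2)$, I would apply Theorem \ref{PIR 2} to the field $D$: a field is trivially both a von Neumann regular ring and a PIR, so $D[X]_A$ is a one-dimensional PIR, and being a localization of the domain $D[X]$ it is itself a domain, hence a PID. For $(1) \Rightarrow (3)$, observe that when $D$ is a field, $A = D[X] \setminus (X)$, so $D[X]_A = D[X]_{(X)}$ is a DVR; equivalently, combine the PID property with Corollary \ref{quasi-local} (giving quasi-localness) and Remark \ref{dimension remark}(1) (ensuring $D[X]_A$ is not a field) to conclude $D[X]_A$ is a DVR, hence a valuation domain. The remaining implications $(2) \Rightarrow (4)$, $(3) \Rightarrow (4)$, and $(4) \Rightarrow (5)$ are classical: PIDs and valuation domains are B\'ezout, and B\'ezout domains are Pr\"ufer.

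There is essentially no obstacle, since every nontrivial ingredient is already in place. The only subtlety worth flagging is the appeal to Remark \ref{dimension remark}(1): it is needed to rule out $D[X]_A$ collapsing to a field, which is what guarantees that the quasi-local PID produced in $(2)$ is a genuine DVR rather than a trivial valuation domain, and it also underlies the direction $(5) \Rightarrow (1)$ by allowing us to pass from "$D$ is von Neumann regular" to "$D$ is a field."
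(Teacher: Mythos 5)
Your proposal is correct and follows essentially the same route as the paper: the equivalences $(1)\Leftrightarrow(2)\Leftrightarrow(5)$ come from Theorems \ref{PIR 2} and \ref{von, Prufer-like}, and $(3)$, $(4)$ are slotted in via the observation that for a field $D$ the ring $D[X]_A$ is a quasi-local PID (the paper cites Corollary \ref{quasi-local}; your identification $D[X]_A=D[X]_{(X)}$ is Lemma \ref{R_P[X]_A_P}(3)), hence a valuation domain, hence B\'ezout, hence Pr\"ufer. The only nit is that $A$ itself is $\{f: f(0)=1\}$ rather than $D[X]\setminus(X)$; it is the saturation of $A$ that equals $D[X]\setminus(X)$, which is all you need.
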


\begin{proof}
The equivalent (1) $\Leftrightarrow$ (2) $\Leftrightarrow$ (5)
directly follow from Theorems \ref{PIR 2} and \ref{von, Prufer-like}.
Suppose that $D$ is a field.
Then $D[X]_A$ is a quasi-local PID by Corollary \ref{quasi-local},
so $D[X]_A$ is a valuation domain.
As every valuation domain is a B\'ezout domain,
$D[X]_A$ is a B\'ezout domain,
and hence $D[X]_A$ is a Pr\"ufer domain.
\end{proof}

\section{Star-operations on $R[X]_A$}\label{sec 4}

In this section,
we investigate star-operations on the Anderson ring.
To help readers better understand this section,
we review some definitions and notation related to star-operations.
In this section, $D$ always denotes an integral domain with quotient field $K$.
Let ${\bf F}(D)$ be the set of nonzero fractional ideals of $D$.
For an $I \in {\bf F}(D)$,
set $I^{-1} := \{a \in K \,|\, aI \subseteq D\}$.
The mapping on ${\bf F}(D)$ defined by $I \mapsto I_v := (I^{-1})^{-1}$
is called the {\it $v$-operation} on $D$;
the mapping on ${\bf F}(D)$ defined by
$I \mapsto I_t :=
\bigcup \{J_v \,|\, J \text{ is a nonzero finitely generated fractional subideal of $I$}\}$
is called the {\it $t$-operation} on $D$.
An ideal $J$ of $D$ is a {\it Glaz–Vasconcelos ideal}
(for short a {\it GV-ideal}),
and denoted by $J \in {\rm GV}(D)$ if
$J$ is finitely generated and $J_v = D$.
For each $I \in {\bf F}(D)$,
the {\it $w$-envelope} of $I$ is the set
$I_w := \{ x \in K \,|\, xJ \subseteq I \text{ for some } J \in {\rm GV}(D)\}$.
The mapping on ${\bf F}(D)$ defined by
$I \mapsto I_w$ is called a {\it $w$-operation} on $D$.
For $*= v,t$ or $w$,
a nonzero fractional ideal $F$ of $D$ is a {\it fractional $*$-ideal} if
$F_* = F$, and
a proper ideal $I$ of $D$ is a {\it maximal $*$-ideal} if
there does not exist a proper $*$-ideal properly containing $I$,
and denoted by $I \in *\mhyphen{\rm Max}(D)$.
The useful facts in this section,
if $D$ is not a field,
then $w\mhyphen{\rm Max}(D) \neq \emptyset$,
$t\mhyphen{\rm Max}(D) = w\mhyphen{\rm Max}(D)$ \cite[Theorem 2.16]{anderson 2000} and
$D = \bigcap_{\mathfrak{m} \in t\mhyphen{\rm Max}(D)} D_{\mathfrak{m}}$
\cite[Proposition 2.9]{kang 1989}.
The readers can refer to \cite{anderson 2000, kang 1989, wang book} for star-operations.

We begin this section with the following lemma.

\begin{lemma}\label{ideal contraction}
Let $R$ be a commutative ring with identity
and let $I, J$ be ideals of $R$.
Then $IR[X]_A \cap R = I$, and hence
$I=J$ if and only if $IR[X]_A = JR[X]_A$.
\end{lemma}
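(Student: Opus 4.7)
The proof proposal is to establish the contraction identity $IR[X]_A \cap R = I$ by the usual two inclusions, with the reverse inclusion exploiting the defining property of $A$ (namely that every element of $A$ has constant term equal to $1$). The containment $I \subseteq IR[X]_A \cap R$ is immediate via $i \mapsto i/1$.

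For the reverse inclusion, I would take $r \in IR[X]_A \cap R$ and write $r = g/h$ with $g \in IR[X]$ and $h \in A$. By the definition of equality in the localization, there exists $h' \in A$ such that $h'(rh - g) = 0$ in $R[X]$. The whole point of the multiplicative set $A$ is that each of its members satisfies $h(0) = h'(0) = 1$, so evaluating the identity $h'rh = h'g$ at $X = 0$ gives
\[
r \;=\; h'(0) \cdot r \cdot h(0) \;=\; h'(0) \cdot g(0) \;=\; g(0).
\]
Since every coefficient of $g$ lies in $I$ (because $g \in IR[X]$), we conclude $g(0) \in I$ and hence $r \in I$. The only subtle point — and this is what I view as the lone technical step rather than a real obstacle — is remembering to go through $h'$ in the localization equality rather than naively asserting $rh = g$ in $R[X]$; once one evaluates at $0$, the constant-term-$1$ property of $A$ does all the work.

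The second assertion follows formally from the first. The forward direction $I = J \Rightarrow IR[X]_A = JR[X]_A$ is trivial. For the converse, if $IR[X]_A = JR[X]_A$, intersecting both sides with $R$ and applying the contraction identity just proved yields $I = IR[X]_A \cap R = JR[X]_A \cap R = J$.
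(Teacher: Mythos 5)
Your proof is correct and follows essentially the same route as the paper's: write $r$ as a fraction over $A$, evaluate at $X=0$, and use that every element of $A$ has constant term $1$ to conclude $r = g(0) \in I$. Your handling of the localization equality via the auxiliary $h'$ is in fact slightly more careful than the paper, which writes $rg=f$ directly (justifiable since any polynomial with unit constant term is a nonzerodivisor in $R[X]$), but the underlying idea is identical.
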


\begin{proof}
Let $r \in IR[X]_A \cap R$.
Then $r = \frac{f}{g}$ for some $f \in IR[X]$ and $g \in A$,
so we obtain the equation $rg=f$.
It follows that $r = rg(0) = f(0) \in I$.
Therefore $IR[X]_A \cap R \subseteq I$.
The reverse containment is obvious.
Thus the first argument holds.
The remainder argument is obvious.
\end{proof}

The next result is a nice tool to investigate star-operations on the Anderson ring.

\begin{proposition}\label{star input}
Let $D$ be an integral domain and
let $I$ be a nonzero fractional ideal of $D$.
Then the following assertions hold.
\begin{enumerate}
\item[\rm(1)]
$(ID[X]_A)^{-1} = I^{-1}D[X]_A$.
\item[\rm(2)]
$(ID[X]_A)_v = I_vD[X]_A$.
\item[\rm(3)]
$(ID[X]_A)_t = I_tD[X]_A$.
\item[\rm(4)]
$(ID[X]_A)_w = I_wD[X]_A$.
\end{enumerate}
\end{proposition}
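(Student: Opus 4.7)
The plan is to establish (1) directly and then derive (2)--(4) from it using standard star-operation manipulations.

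For (1), the inclusion $I^{-1}D[X]_A \subseteq (ID[X]_A)^{-1}$ follows immediately from $I \cdot I^{-1} \subseteq D$. For the reverse, given $\alpha \in (ID[X]_A)^{-1} \subseteq K(X)$, I first show that $\alpha$ has no pole at $X = 0$. Writing $\alpha = F/G$ in lowest terms in $K[X]$ with $G$ monic, if $G(0) = 0$ then $F(0) \neq 0$ by coprimality; for any nonzero $i \in I$, the relation $iFq_i = Gp_i$ extracted from $\alpha i = p_i/q_i \in D[X]_A$ (with $p_i \in D[X]$, $q_i \in A$), evaluated at $X = 0$, yields $iF(0) = 0$, forcing $i = 0$, a contradiction. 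Hence $\alpha \in K[X]_{(X)}$ and admits a power series expansion $\alpha = \sum_{k \geq 0} a_k X^k$ with $a_k \in K$.

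The key tool is the evaluation homomorphism $\varepsilon \colon D[X]_A \to D$, $f/g \mapsto f(0)$ (well-defined since $g(0) = 1$ for $g \in A$). I show inductively that each $a_k$ lies in $I^{-1}$. For the base case, $\varepsilon(\alpha i) = ia_0 \in D$ for every $i \in I$, so $a_0 \in I^{-1}$. For the inductive step, set $\beta_n := (\alpha - \sum_{k<n} a_k X^k)/X^n$: since each $ia_k \in D$, and since any $\gamma = p/q \in D[X]_A$ (with $q \in A$) whose power series vanishes in degrees $<n$ must satisfy $p_0 = \cdots = p_{n-1} = 0$ and hence lies in $X^n D[X]_A$, we obtain $\beta_n \in (ID[X]_A)^{-1}$; the base case applied to $\beta_n$ yields $a_n \in I^{-1}$. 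To produce a finite expression, pick any nonzero $i_0 \in I$ and write $\alpha i_0 = p_0/q_0$ with $p_0 \in D[X]$, $q_0 \in A$; the $k$-th coefficient of $p_0 = i_0 \alpha q_0$ is $i_0$ times a $D$-linear combination of the $a_j$'s, so every coefficient of $p_0/i_0$ lies in $I^{-1}$, giving $p_0/i_0 \in I^{-1}D[X]$ and $\alpha = (p_0/i_0)/q_0 \in I^{-1}D[X]_A$.

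Parts (2)--(4) follow quickly from (1). For (2), apply (1) twice: $(ID[X]_A)_v = ((ID[X]_A)^{-1})^{-1} = (I^{-1}D[X]_A)^{-1} = (I^{-1})^{-1}D[X]_A = I_v D[X]_A$. For (3), use the identity $I_t = \bigcup\{J_v : J \subseteq I \text{ finitely generated}\}$; every finitely generated subideal of $ID[X]_A$ is contained in $JD[X]_A$ for some finitely generated $J \subseteq I$ (since its finitely many generators involve only finitely many elements of $I$), so (2) applied to $J$ together with Lemma \ref{number of generator} gives the equality. For (4), use $I_w = \bigcup\{(I : J) : J \in \mathrm{GV}(D)\}$: by (2) and Lemma \ref{number of generator}, the extension of any GV-ideal of $D$ is a GV-ideal of $D[X]_A$, yielding $I_w D[X]_A \subseteq (ID[X]_A)_w$; for the reverse, the crucial point is that every GV-ideal of $D[X]_A$ contains the extension of some GV-ideal of $D$, which I expect to be the main technical obstacle and will require tracing the finite-generation and $v$-closure data back through the evaluation homomorphism $\varepsilon$.
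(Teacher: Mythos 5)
Your parts (1)--(3) and the forward inclusion $I_wD[X]_A \subseteq (ID[X]_A)_w$ of (4) are sound. For (1) you take a genuinely different route from the paper: the paper writes $\alpha = f/g$ with $f \in K[X]$, $g \in A$, and then shows $c(f) \subseteq I^{-1}$ in one stroke via the content formula ($ic(f) = ic(fg_i) = c(ifg_i) \subseteq D$, using $c(g_i) = D$), whereas your power-series induction through the evaluation map $\varepsilon$ reaches the same conclusion coefficient by coefficient. Both work; the paper's is shorter, yours avoids invoking Dedekind--Mertens in this step.

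The genuine gap is the reverse inclusion of (4). You reduce it to the claim that every GV-ideal of $D[X]_A$ contains the extension of some GV-ideal of $D$, and leave that claim unproved. In fact it is false. Take $D = \mathbb{Z}$, so that ${\rm GV}(\mathbb{Z}) = \{\mathbb{Z}\}$ and the only extended GV-ideal is $\mathbb{Z}[X]_A$ itself. The ideal $J = (2, X)\mathbb{Z}[X]_A$ is proper (it lies in $(2\mathbb{Z}+X\mathbb{Z}[X])_A$), yet since $2$ and $X$ are non-associate primes in the UFD $\mathbb{Z}[X]_A$ one computes $J^{-1} = \tfrac{1}{2}\mathbb{Z}[X]_A \cap \tfrac{1}{X}\mathbb{Z}[X]_A = \tfrac{1}{2X}\bigl(2\mathbb{Z}[X]_A \cap X\mathbb{Z}[X]_A\bigr) = \tfrac{1}{2X}(2X)\mathbb{Z}[X]_A = \mathbb{Z}[X]_A$, so $J \in {\rm GV}(\mathbb{Z}[X]_A)$ but $J$ contains no extended GV-ideal. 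The paper circumvents this entirely: given $f J \subseteq ID[X]_A$ with $J = (\tfrac{f_1}{g_1},\dots,\tfrac{f_n}{g_n}) \in {\rm GV}(D[X]_A)$, it first uses your part (2) together with the contraction lemma (Lemma \ref{ideal contraction}) to show $c(f_1)+\cdots+c(f_n) \in {\rm GV}(D)$, and then applies the Dedekind--Mertens formula $c(f_i)^{m}c(ff_i) = c(f_i)^{m+1}c(f)$ to the relations $ff_ih_i \in ID[X]$ to conclude $c(f)\bigl(c(f_1)^{m+1}+\cdots+c(f_n)^{m+1}\bigr) \subseteq I$, where $c(f_1)^{m+1}+\cdots+c(f_n)^{m+1} \in {\rm GV}(D)$. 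The witnessing GV-ideal of $D$ is thus manufactured from the contents of the generators of $J$, not found inside $J$; you would need to replace your proposed claim with an argument of this kind.
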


\begin{proof}
Let $I$ be a nonzero fractional ideal of $D$.
Then there exist a nonzero element $d \in D$ and
a nonzero ideal $J$ of $D$ such that
$J = dI$.
Hence we may assume that $I$ is an integral ideal.

(1) Let $\alpha \in (ID[X]_A)^{-1}$.
Then $\alpha ID[X]_A \subseteq D[X]_A$,
so for any $i \in I$,
$\alpha \in i^{-1}D[X]_A \subseteq K[X]_A$.
This implies that there exist $f \in K[X]$ and $g \in A$ such that
$\alpha = \frac{f}{g}$.
Hence $f \in (ID[X]_A)^{-1}$,
so $fI \subseteq fID[X]_A \subseteq D[X]_A$.
Therefore for each $i$,
there exists $g_i \in A$ such that $ifg_i \in D[X]$,
which means that $ic(f) = ic(fg_i) = c(ifg_i) \subseteq D$.
Hence $c(f)I \subseteq D$, so $c(f) \subseteq I^{-1}$;
that is, $f \in I^{-1}D[X]$.
Thus $\alpha = \frac{f}{g} \in I^{-1}D[X]_A$.
Consequently, $(ID[X]_A)^{-1} \subseteq I^{-1}D[X]_A$.
The reverse containment is obvious.

(2) This result directly follows from the assertion (1).

(3) Let $\alpha \in (ID[X]_A)_t$.
Then there exists a finitely generated ideal $J$ of $D[X]_A$ with $J \subseteq ID[X]_A$
such that $\alpha \in J_v$.
Let $J = (f_1,\dots,f_n)D[X]_A$, where $f_1,\dots,f_n \in ID[X]_A$.
Then $J \subseteq (c(f_1) + \cdots + c(f_n))D[X]_A$.
This implies that $\alpha \in J_v \subseteq (c(f_1) + \cdots + c(f_n))_v D[X]_A
\subseteq I_tD[X]_A$,
where the first containment follow from the assertion (2).
Hence $(ID[X]_A)_t \subseteq I_tD[X]_A$.
For the reverse containment,
let $\frac{f}{g} \in I_tD[X]_A$,
where $f \in I_tD[X]$ and $g \in A$.
Then $c(f) \subseteq I_t$,
so there exists a finitely generated ideal $J$ of $D$ with $J \subseteq I$
such that $c(f) \subseteq J_v$.
Hence $f \in c(f)_vD[X] \subseteq J_vD[X] \subseteq J_vD[X]_A = (JD[X]_A)_v
\subseteq (ID[X]_A)_t$,
where the first equality directly follows from the assertion (2).
Thus $\frac{f}{g} \in (ID[X]_A)_t$.

(4) Let $\frac{f}{g} \in I_wD[X]_A$,
where $f \in I_wD[X]$ and $g \in A$.
Then $c(f) \subseteq I_w$,
so there exists $J \in {\rm GV}(D)$ such that
$c(f)J \subseteq I$.
This implies that $\frac{f}{g}  JD[X]_A \subseteq ID[X]_A$.
By the assertion (1),
$JD[X]_A \in {\rm GV}(D[X]_A)$.
Hence $\frac{f}{g} \in (ID[X]_A)_w$.
For the reverse containment,
let $\frac{f}{g} \in (ID[X]_A)_w$.
As $(ID[X]_A)_w \subseteq D[X]_A$,
we may assume that $f \in D[X]$ and $g \in A$.
This implies that it is sufficient to show that
$f \in I_wD[X]$.
As $f \in (ID[X]_A)_w$,
there exists $J = (\frac{f_1}{g_1},\dots,\frac{f_n}{g_n}) \in {\rm GV}(D[X]_A)$
such that $fJ \subseteq ID[X]_A$.
As $J \subseteq (c(f_1)+ \cdots + c(f_n))D[X]_A$ and $J_v = D[X]_A$,
$(c(f_1)+ \cdots + c(f_n))_vD[X]_A = ((c(f_1)+ \cdots + c(f_n))D[X]_A)_v = D[X]_A$,
where the first equality follows from the assertion (2).
Hence by Lemma \ref{ideal contraction},
$(c(f_1) + \cdots + c(f_n))_v = D$,
which means that $c(f_1) + \cdots + c(f_n) \in {\rm GV}(D)$.
Since for all $1 \leq i \leq n$, $f \frac{f_i}{g_i} \in ID[X]_A$,
for each $1 \leq i \leq n$,
there exist $h_1,\dots,h_n \in A$ such that
$ff_ih_i \in ID[X]$.
Therefore there exists a positive integer $m \in \mathbb{N}$ such that
$c(f_i)^{m+1}c(f) = c(f_i)^m c(ff_i) \subseteq I$ for all $1 \leq i \leq n$
\cite[Theorem 1.7.16]{wang book}.
Hence $c(f)(c(f_1)^{m+1} + \cdots + c(f_n)^{m+1}) \subseteq I$.
As $c(f_1) + \cdots + c(f_n) \in {\rm GV}(D)$,
$c(f_1)^{m+1} + \cdots + c(f_n)^{m+1} \in {\rm GV}(D)$.
This implies that $c(f) \subseteq I_w$,
so $f \in I_wD[X]$.
Thus the equality holds.
\end{proof}

By Proposition \ref{star input},
we can derive several results.

\begin{corollary}\label{*-ideal cor}
Let $D$ be an integral domain and
let $I$ be a nonzero fractional ideal of $D$.
If $* = v,t$ or $w$,
then $I$ is a $*$-ideal if and only if
$ID[X]_A$ is a $*$-ideal.
\end{corollary}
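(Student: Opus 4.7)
The plan is to apply Proposition \ref{star input} to convert the equation defining a $*$-ideal back and forth between $D$ and $D[X]_A$, and then to use Lemma \ref{ideal contraction} (after reducing to the integral ideal case) for the injectivity of the extension $I \mapsto ID[X]_A$.

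For the forward direction I would simply note: if $I_* = I$, then by Proposition \ref{star input} we get $(ID[X]_A)_* = I_* D[X]_A = ID[X]_A$, so $ID[X]_A$ is a $*$-ideal. This takes one line.

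For the converse, assume $(ID[X]_A)_* = ID[X]_A$. Proposition \ref{star input} rewrites the left-hand side as $I_* D[X]_A$, giving the equality $I_* D[X]_A = ID[X]_A$ inside the fractional-ideal setting. To conclude $I_* = I$, I would reduce to integral ideals exactly as in the opening of the proof of Proposition \ref{star input}: choose a nonzero $d \in D$ with $dI \subseteq D$; since $I \subseteq I_*$ and $I_*$ is the minimal $*$-closed fractional ideal containing $I$, we also have $dI_* \subseteq D$. Multiplying the equality $I_*D[X]_A = ID[X]_A$ by $d$ yields $(dI_*)D[X]_A = (dI)D[X]_A$ with both sides extensions of integral ideals, so Lemma \ref{ideal contraction} forces $dI_* = dI$, and cancelling $d$ gives $I_* = I$.

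The argument contains no real obstacle: all the analytic work has already been done in Proposition \ref{star input} (the key computations $(ID[X]_A)^{-1} = I^{-1}D[X]_A$ and the analogous equalities for the $v$-, $t$-, and $w$-closures) and in Lemma \ref{ideal contraction} (the injectivity of extension). The only point requiring a sentence of care is the passage from fractional ideals to integral ones before applying Lemma \ref{ideal contraction}, since the lemma is stated for ideals rather than fractional ideals.
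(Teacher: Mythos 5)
Your proof is correct and follows the same route the paper intends: the paper states Corollary \ref{*-ideal cor} as an immediate consequence of Proposition \ref{star input} with no written proof, and your argument (one line for the forward direction, plus Lemma \ref{ideal contraction} after clearing denominators for the converse) supplies exactly the missing details. The reduction to integral ideals before invoking Lemma \ref{ideal contraction} is handled correctly.
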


Recall that an $I \in {\bf F}(D)$ is a {$w$-invertible ideal} if
$(II^{-1})_w = D$.

\begin{corollary}\label{w-invertible}
Let $D$ be an integral domain and
let $I$ be a nonzero fractional ideal of $D$.
Then $I$ is $w$-invertible in $D$ if and only if
$ID[X]_A$ is $w$-invertible in $D[X]_A$.
\end{corollary}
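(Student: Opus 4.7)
The plan is to unwind the definition of $w$-invertibility using Proposition \ref{star input} and then pull the resulting ideal equality down to $D$ via Lemma \ref{ideal contraction}. By definition, $I$ is $w$-invertible in $D$ precisely when $(II^{-1})_w = D$, and $ID[X]_A$ is $w$-invertible in $D[X]_A$ precisely when $(ID[X]_A \cdot (ID[X]_A)^{-1})_w = D[X]_A$.

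First I would apply Proposition \ref{star input}(1) to rewrite the inverse on the $D[X]_A$ side, obtaining
\[
ID[X]_A \cdot (ID[X]_A)^{-1} = ID[X]_A \cdot I^{-1}D[X]_A = (II^{-1})D[X]_A.
\]
Next, I would apply Proposition \ref{star input}(4) to push the $w$-operation through the extension:
\[
\bigl((II^{-1})D[X]_A\bigr)_w = (II^{-1})_w\, D[X]_A.
\]
So the problem reduces to showing that $(II^{-1})_w D[X]_A = D[X]_A$ if and only if $(II^{-1})_w = D$.

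The forward direction of this last equivalence is immediate (extend $D = (II^{-1})_w$ to $D[X]_A$), and the reverse direction is the content of Lemma \ref{ideal contraction}: contracting $(II^{-1})_w D[X]_A = D[X]_A$ back to $D$ yields $(II^{-1})_w = (II^{-1})_w D[X]_A \cap D = D[X]_A \cap D = D$. There is no real obstacle here, since all the heavy lifting (commuting $w$ and $v$ with the Anderson extension, and the contraction identity) has already been established; the statement is essentially a formal consequence of Proposition \ref{star input} and Lemma \ref{ideal contraction}.
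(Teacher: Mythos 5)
Your proposal is correct and follows essentially the same route as the paper: both reduce the claim to the identity $((ID[X]_A)(ID[X]_A)^{-1})_w = (II^{-1})_w D[X]_A$ via Proposition \ref{star input} and then settle the two directions by extension and by contraction through Lemma \ref{ideal contraction}. No substantive difference.
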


\begin{proof}
By the same reason of Proposition \ref{star input},
we may assume that $I$ is an integral ideal of $D$.
Note that $((ID[X]_A)(ID[X]_A)^{-1})_w = (II^{-1})_wD[X]_A$
by Proposition \ref{star input}.
Suppose that $I$ is a $w$-invertible ideal of $D$.
Then $(II^{-1})_w = D$.
This implies that $((ID[X]_A)(ID[X]_A)^{-1})_w = (II^{-1})_wD[X]_A = D[X]_A$.
Hence $ID[X]_A$ is a $w$-invertible ideal of $D[X]_A$.
For the converse,
suppose that $ID[X]_A$ is a $w$-invertible ideal of $D[X]_A$.
Then $(II^{-1})_wD[X]_A = ((ID[X]_A)(ID[X]_A)^{-1})_w = D[X]_A$.
Thus $(II^{-1})_w = D$ by Lemma \ref{ideal contraction}.
Consequently, $I$ is a $w$-invertible ideal of $D$.
\end{proof}

Let $D$ be an integral domain.
In \cite{kang 1989},
the author shows that ${\rm Max}(D[X]_{N_v})
= \{MD[X]_{N_v} \,|\, M \in w \mhyphen {\rm Max}(D)\}$,
where $N_v = \{f \in D[X] \,|\, c(f)_v = D\}$,
and hence $w\mhyphen{\rm Max}(D[X]_{N_v}) = {\rm Max}(D[X]_{N_v})$.
Inspired by this, we intend to characterize the $w$-maximal spectrum
of the Anderson ring.
Recall that a prime ideal $P$ of $D[X]$ is an {\it upper to zero} in $D[X]$ if
$P$ is a nonzero ideal with $P \cap D = (0)$.
It is clear that every upper to zero in $D[X]$ is a height-one prime ideal,
and hence it is a prime $t$-ideal.
The following result is the main theorem of this section.

\begin{theorem}\label{maximal w-ideal}
Let $D$ be an integral domain.
Then $\mathfrak{m}$ is a maximal $w$-ideal of $D[X]_A$ if and only if
$\mathfrak{m}$ is exactly one of the form
\begin{enumerate}
\item[\rm(1)]
$MD[X]_A$ for some maximal $w$-ideal $M$ of $D$, or
\item[\rm(2)]
$\mathfrak{p}D[X]_A$, where $\mathfrak{p} \in w$-${\rm Max}(D[X])$
is an upper to zero in $D[X]$ disjoint from $A$.
\end{enumerate}
\end{theorem}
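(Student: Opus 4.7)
The plan is to reduce the classification of maximal $w$-ideals of $D[X]_A$ to that of maximal $w$-ideals of $D[X]$ via localization. Given a maximal $w$-ideal $\mathfrak{m}$ of $D[X]_A$, I set $\mathfrak{q} := \mathfrak{m}\cap D[X]$. Standard localization theory gives $\mathfrak{m} = \mathfrak{q}D[X]_A$ with $\mathfrak{q}$ a prime ideal of $D[X]$ disjoint from $A$. From $\mathfrak{m} = \mathfrak{m}_w$ and Proposition \ref{star input}(4), I get $\mathfrak{q}_wD[X]_A = \mathfrak{q}D[X]_A$, and Lemma \ref{ideal contraction} then yields $\mathfrak{q}_w = \mathfrak{q}$; so $\mathfrak{q}$ is a prime $w$-ideal of $D[X]$.

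Next, I split on $P := \mathfrak{q}\cap D$. When $P \neq (0)$, the Querre--Houston--Zafrullah structure theorem for prime $t$-ideals (equivalently $w$-ideals) of $D[X]$ forces $\mathfrak{q} = PD[X]$ with $P$ a prime $w$-ideal of $D$; choosing $M \in w\mhyphen{\rm Max}(D)$ with $M \supseteq P$, the key observation that every $f \in A$ has $f(0)=1 \notin M$ gives $MD[X]\cap A = \emptyset$, so $MD[X]_A$ is a proper $w$-ideal of $D[X]_A$ containing $\mathfrak{m}$; maximality of $\mathfrak{m}$ forces $\mathfrak{m} = MD[X]_A$, giving form (1). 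When $P = (0)$, $\mathfrak{q}$ is an upper to zero; I pick a maximal $w$-ideal $\mathcal{Q}$ of $D[X]$ containing $\mathfrak{q}$. If $\mathcal{Q} = MD[X]$, the previous case argument would force $\mathfrak{m} = MD[X]_A$ and hence $\mathfrak{q} = MD[X]$ via Lemma \ref{ideal contraction}, contradicting $P = (0)$; so $\mathcal{Q}$ must itself be an upper to zero, and since upper-to-zero primes in $D[X]$ have height one, $\mathfrak{q} = \mathcal{Q} \in w\mhyphen{\rm Max}(D[X])$, with disjointness from $A$ automatic. This gives form (2).

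For the reverse containment under the integral closedness hypothesis, I check each form independently. Integral closedness (together with the Querre-type structure result) is what guarantees $MD[X]$ is itself a maximal $w$-ideal of $D[X]$ for every $M \in w\mhyphen{\rm Max}(D)$; granted this, Corollary \ref{*-ideal cor} makes $MD[X]_A$ a proper $w$-ideal, and any properly larger proper $w$-ideal $\mathfrak{n}$ of $D[X]_A$ would contract (via the same Proposition \ref{star input}(4)/Lemma \ref{ideal contraction} tandem) to a proper $w$-ideal of $D[X]$ strictly containing $MD[X]$, contradicting maximality. For form (2), $\mathfrak{p}$ is already assumed to lie in $w\mhyphen{\rm Max}(D[X])$ and be disjoint from $A$, so the analogous contraction argument shows $\mathfrak{p}D[X]_A$ is maximal in $w\mhyphen{\rm Max}(D[X]_A)$.

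The main obstacle will be the $P = (0)$ subcase of the forward direction: one must cleanly rule out $\mathcal{Q} = MD[X]$ using the ``constant term is $1$'' trick and then exploit the height-one property of upper-to-zero primes to pin down $\mathfrak{q}$ as itself a maximal $w$-ideal of $D[X]$. A secondary subtlety throughout is the consistent verification that contractions of $w$-ideals from $D[X]_A$ back to $D[X]$ remain $w$-ideals, which is exactly where the combined use of Proposition \ref{star input}(4) and Lemma \ref{ideal contraction} becomes essential.
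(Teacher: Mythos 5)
Your forward direction follows the paper's argument essentially step for step: localize, split on $\mathfrak{q}\cap D$, push $\mathfrak{q}$ up to a maximal $w$-ideal of $D[X]$, use the structure theorem for maximal $w$-ideals of $D[X]$ meeting $D$ nontrivially (\cite[Proposition 2.2]{fontana 1998}), and exploit $MD[X]\cap A=\emptyset$ together with the maximality of $\mathfrak{m}$. That part is sound, though two citations are off: Proposition \ref{star input}(4) and Lemma \ref{ideal contraction} are statements about ideals of $D$ extended to $D[X]_A$, so neither literally justifies ``$\mathfrak{q}_w=\mathfrak{q}$'' for an ideal $\mathfrak{q}$ of $D[X]$ nor the contraction $\mathfrak{q}D[X]_A\cap D[X]=\mathfrak{q}$; you need the separate (standard) facts that a prime disjoint from $A$ contracts back to itself and that contractions of $w$-ideals along the localization $D[X]\to D[X]_A$ are $w$-ideals. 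Also, the claim $\mathfrak{q}=PD[X]$ is stronger than what you need (and the parenthetical ``equivalently $w$-ideals'' is not justified for non-maximal primes); the containment $\mathfrak{q}\subseteq MD[X]$ already suffices.

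The genuine gap is in the reverse direction for type (2), and it stems from misplacing where integral closedness is used. For type (1) you assert that integral closedness ``is what guarantees'' $MD[X]\in w\mhyphen{\rm Max}(D[X])$; in fact the paper's type (1) argument needs only that $MD[X]_A$ is a proper $w$-ideal of $D[X]_A$ (Proposition \ref{star input}(4) plus $MD[X]\cap A=\emptyset$) combined with the already-proved forward classification, and no integral closedness. The real crux is type (2): before any contraction or maximality argument applies, you must show that $\mathfrak{p}D[X]_A$ is a $w$-ideal of $D[X]_A$ at all, i.e., $(\mathfrak{p}D[X]_A)_w=\mathfrak{p}D[X]_A$. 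This is not automatic, because the $w$-operation of $D[X]_A$ is computed with ${\rm GV}$-ideals of $D[X]_A$ and Proposition \ref{star input} controls only extensions of ideals of $D$, not extensions of uppers to zero in $D[X]$. This is exactly where the paper invokes integral closedness: for integrally closed $D$ one can write $\mathfrak{p}=fID[X]$ with $f\in K[X]$ and $I$ a $w$-ideal of $D$ (\cite[Theorems 7.3.14 and 7.3.15]{wang book}), and then $(\mathfrak{p}D[X]_A)_w=f(ID[X]_A)_w=fID[X]_A=\mathfrak{p}D[X]_A$ by Proposition \ref{star input}(4). Your proposal skips this step entirely (``the analogous contraction argument shows\dots''), leaving the type (2) maximality claim unsupported; note the authors themselves pose as an open question whether this part survives without integral closedness, so it cannot be dismissed as routine.
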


\begin{proof}
Let $\mathfrak{m}$ be a maximal $w$-ideal of $D[X]_A$.
Then $\mathfrak{p} := \mathfrak{m} \cap D[X]$ is a prime $w$-ideal of $D[X]$ \cite[Lemma 1.2(2)]{KKL 2014}
and $\mathfrak{m} = \mathfrak{p}D[X]_A$.
We first consider the case $\mathfrak{p} \cap D \neq (0)$.
Note that there is a maximal $w$-ideal $\mathfrak{q}$ of $D[X]$ containing $\mathfrak{p}$,
and $\mathfrak{q} = MD[X]$ for some $M \in w\mhyphen{\rm Max}(D)$
\cite[Proposition 2.2]{fontana 1998}.
By Proposition \ref{star input}(4),
$MD[X]_A$ is a $w$-ideal of $D[X]_A$.
As $\mathfrak{m} \subseteq MD[X]_A$ and
$\mathfrak{m}$ is a maximal $w$-ideal of $D[X]_A$,
we obtain $\mathfrak{m} = MD[X]_A$.
Now, suppose that $\mathfrak{p} \cap D = (0)$.
If $\mathfrak{p}$ is not a maximal $w$-ideal of $D[X]$,
then there is a maximal $w$-ideal $\mathfrak{q}$ of $D[X]$
properly containing $\mathfrak{p}$.
Since $\mathfrak{q}$ is not an upper to zero in $D[X]$,
$\mathfrak{q} = MD[X]$ for some $M \in w\mhyphen{\rm Max}(D)$
\cite[Proposition 2.2]{fontana 1998}.
This implies that $MD[X]_A$ is a $w$-ideal of $D[X]_A$
properly containing $\mathfrak{m}$.
This contradicts to the fact that $\mathfrak{m}$ is a maximal $w$-ideal of $D[X]_A$.
Hence $\mathfrak{p}$ is a maximal $w$-ideal of $D[X]$.
Now, we claim that every ideal of the forms
(1) and (2) is a maximal $w$-ideal of $D[X]_A$.
Let $M$ be a maximal $w$-ideal of $D$.
Then $MD[X]_A$ is a $w$-ideal of $D[X]_A$ by Proposition \ref{star input}(4).
Hence there is a maximal $w$-ideal $\mathfrak{m}$ of $D[X]_A$ containing $MD[X]_A$.
As $\mathfrak{m} \cap D \neq (0)$,
there is a maximal $w$-ideal $M_1$ of $D$ such that
$\mathfrak{m} = M_1D[X]_A$ by the above argument.
Since $M \subseteq M_1$ and $M$ is a maximal $w$-ideal of $D$,
$M = M_1$,
which shows that $MD[X]_A$ is a maximal $w$-ideal of $D[X]_A$.
Next, assume that $\mathfrak{p} \in w\mhyphen{\rm Max}(D[X])$
is an upper zero in $D[X]$ disjoint from $A$.
Then $\mathfrak{p}D[X]_A$ is a height-one prime ideal,
so it is a $w$-ideal.
Thus $\mathfrak{p}D[X]_A$ is a maximal $w$-ideal of $D[X]_A$ by Proposition \ref{star input}(4).
Consequently, every maximal $w$-ideal of $D[X]_A$ is of the form
(1) or (2).
\end{proof}

We can derive many facts from Theorem \ref{maximal w-ideal}.
From now on, we investigate some properties related to $w$-maximal spectrum of the Anderson ring.

First, we examine the $w$-(Krull) dimension of the Anderson ring.
Let $D$ be an integral domain.
Note that every prime ideal containing a maximal $w$-ideal is a $w$-ideal.
According to this fact,
Wang defined the {\it $w$-$($Krull$)$ dimension} of
an integral domain $D$, denoted by
$w \mhyphen\dim(D)$, as
$w \mhyphen\dim (D) =
\sup\{{\rm ht}(M) \,|\, M \in w \mhyphen {\rm Max}(D)\}$
in \cite{wang 1999-2}.
A well-known fact is that
$n \leq w\mhyphen \dim(D[X]) \leq 2n$ when $w\mhyphen \dim(D) = n$.
Using only the definition of the $w$-dimensions and the fact above,
we can easily derive the following result.

\begin{corollary}
Let $D$ be an integral domain.
Then $w\mhyphen\dim(D[X]) = w\mhyphen\dim(D[X]_A)$,
and hence $n \leq w\mhyphen\dim(D[X]_A) \leq 2n$ if
$w\mhyphen\dim(D) = n$.
\end{corollary}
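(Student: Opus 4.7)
My plan is to compare the maximal $w$-spectra of $D[X]_A$ and $D[X]$ using Theorem \ref{maximal w-ideal} together with Proposition \ref{star input}(4), verify that every height achievable on one side is matched on the other, and conclude that $w\mhyphen\dim(D[X]_A) = w\mhyphen\dim(D[X])$. The stated range $n \leq w\mhyphen\dim(D[X]_A) \leq 2n$ will then fall out immediately from the known bound $n \leq w\mhyphen\dim(D[X]) \leq 2n$ recalled just above the corollary.

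For the inequality $w\mhyphen\dim(D[X]_A) \leq w\mhyphen\dim(D[X])$, I would take a maximal $w$-ideal $\mathfrak{m}$ of $D[X]_A$. By Theorem \ref{maximal w-ideal}, $\mathfrak{m} = \mathfrak{p}D[X]_A$ for some $\mathfrak{p} \in w\mhyphen{\rm Max}(D[X])$ disjoint from $A$, namely either $MD[X]$ with $M \in w\mhyphen{\rm Max}(D)$, or an upper to zero in $w\mhyphen{\rm Max}(D[X])$. Every prime of $D[X]$ contained in $\mathfrak{p}$ is automatically disjoint from $A$ (as $A \cap \mathfrak{p} = \emptyset$), so the standard correspondence between primes disjoint from $A$ and primes of the localization is height-preserving, yielding ${\rm ht}(\mathfrak{m}) = {\rm ht}(\mathfrak{p}) \leq w\mhyphen\dim(D[X])$.

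For the reverse inequality, I start from a maximal $w$-ideal $\mathfrak{q}$ of $D[X]$. By the Fontana--Gabelli description \cite[Proposition 2.2]{fontana 1998}, either $\mathfrak{q} = MD[X]$ with $M \in w\mhyphen{\rm Max}(D)$, or $\mathfrak{q}$ is an upper to zero of height $1$. In the first case $\mathfrak{q}$ is disjoint from $A$, and Proposition \ref{star input}(4) shows that $MD[X]_A$ is a prime $w$-ideal of $D[X]_A$ of height ${\rm ht}(MD[X])$; it sits inside some maximal $w$-ideal, giving $w\mhyphen\dim(D[X]_A) \geq {\rm ht}(\mathfrak{q})$. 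In the upper-to-zero case it suffices to exhibit a prime $w$-ideal of $D[X]_A$ of height at least $1$: if $D$ is not a field, any nonzero $M \in w\mhyphen{\rm Max}(D)$ already gives ${\rm ht}(MD[X]_A) \geq 1$ via the previous case, while if $D$ is a field, then $(X)$ is an upper to zero disjoint from $A$, and Proposition \ref{star input}(4) makes $(X)D[X]_A$ a prime $w$-ideal of $D[X]_A$ of height $1$.

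The main obstacle I anticipate is precisely the upper-to-zero case in the second inequality: an upper-to-zero maximal $w$-ideal of $D[X]$ need not meet $A$ trivially, and so may not survive as a nonzero ideal of $D[X]_A$. The field/non-field dichotomy sidesteps this by always producing a substitute prime $w$-ideal of $D[X]_A$ of height at least one, after which the remaining bookkeeping is routine.
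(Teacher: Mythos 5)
Your argument is correct and is essentially the intended one: the paper offers no written proof of this corollary (it merely asserts that it follows from the definition of $w$-dimension and the preceding facts, i.e.\ Theorem \ref{maximal w-ideal}), and your two inequalities --- via the height-preserving correspondence for primes disjoint from $A$ in one direction, and the Fontana--Gabelli description of $w\mhyphen{\rm Max}(D[X])$ plus Proposition \ref{star input}(4) in the other --- are exactly the natural fleshing-out, including the correct observation that uppers to zero have height $1$ so only \emph{some} height-$\geq 1$ maximal $w$-ideal of $D[X]_A$ need be produced. One cosmetic point: Proposition \ref{star input}(4) concerns extended ideals $ID[X]_A$ with $I$ a fractional ideal of $D$, so it does not literally apply to $(X)D[X]_A$ in the field case; but that ideal is principal, hence divisorial, hence a $w$-ideal, so your conclusion stands.
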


Let $R$ be a commutative ring with identity and
let $D$ be an integral domain.
It is a well-known fact that
every prime ideal is contained in a maximal ideal.
If such maximal ideal is unique,
the ring $R$ is called a {\it pm-ring}.
Similarly, it is easy to show that
every prime $w$-ideal is contained in a maximal $w$-ideal.
If such a maximal $w$-ideal is unique,
then the integral domain $D$ is called a {\it $w$-$pm$-domain}.
Recall that $D$ is a {\it UMT-domain} if
every upper to zero in $D[X]$ is a maximal $w$-ideal.

\begin{corollary}\label{w-pm-domain}
Let $D$ be an integral domain.
If $D[X]_A$ is a $w$-$pm$-domain,
then so is $D$.
Moreover, the converse holds when $D$ is a UMT-domain.
\end{corollary}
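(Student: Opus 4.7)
The plan is to leverage Theorem~\ref{maximal w-ideal} together with the bijective correspondence between prime ideals of $D[X]_A$ and primes of $D[X]$ disjoint from $A$; extension and contraction of $w$-ideals along this correspondence, supplied by Proposition~\ref{star input}(4), Corollary~\ref{*-ideal cor}, and Lemma~\ref{ideal contraction}, will move the $w$-$pm$ hypothesis back and forth between $D$ and $D[X]_A$. For the forward direction, I would start with a prime $w$-ideal $P$ of $D$ and two maximal $w$-ideals $M_1,M_2\supseteq P$; after extending, each $M_iD[X]_A$ is a proper $w$-ideal by Proposition~\ref{star input}(4), so by Zorn it sits inside some maximal $w$-ideal $\mathfrak{m}_i$ of $D[X]_A$. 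Since $\mathfrak{m}_i\cap D$ contains $M_i\neq 0$, the first half of Theorem~\ref{maximal w-ideal} forces $\mathfrak{m}_i = N_iD[X]_A$ with $N_i\in w\mhyphen{\rm Max}(D)$, and maximality of $M_i$ pins down $N_i=M_i$. Both $\mathfrak{m}_i$ then lie above the prime $w$-ideal $PD[X]_A$, so the $w$-$pm$ hypothesis on $D[X]_A$ collapses them, and Lemma~\ref{ideal contraction} descends the equality to $M_1=M_2$.

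For the converse, assuming $D$ is an integrally closed UMT-domain that is already $w$-$pm$, I would take a prime $w$-ideal $\mathfrak{P}=\mathfrak{p}D[X]_A$ of $D[X]_A$ (with $\mathfrak{p}$ a prime $w$-ideal of $D[X]$ disjoint from $A$ by Corollary~\ref{*-ideal cor}) and split on $\mathfrak{p}\cap D$. If $\mathfrak{p}\cap D=(0)$ then $\mathfrak{p}$ is an upper to zero in $D[X]$, the UMT hypothesis upgrades it to a maximal $w$-ideal of $D[X]$, and the integrally closed half of Theorem~\ref{maximal w-ideal} promotes $\mathfrak{P}$ itself to a maximal $w$-ideal of $D[X]_A$, so $\mathfrak{P}$ is trivially the unique maximal $w$-ideal above itself. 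If instead $P:=\mathfrak{p}\cap D\neq(0)$, I would first check $P$ is a prime $w$-ideal of $D$ via the standard observation that $JD[X]\in{\rm GV}(D[X])$ whenever $J\in{\rm GV}(D)$, then use $w$-$pm$ of $D$ to pick the unique $M\in w\mhyphen{\rm Max}(D)$ with $P\subseteq M$. Next, choosing any maximal $w$-ideal of $D[X]$ above $\mathfrak{p}$, the UMT hypothesis forces it to have the form $ND[X]$ (uppers to zero are excluded since the contraction contains $P\neq 0$), and $P\subseteq N$ combined with $w$-$pm$ uniqueness gives $N=M$; hence $\mathfrak{p}\subseteq MD[X]$, so $\mathfrak{P}\subseteq MD[X]_A$. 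Finally, any maximal $w$-ideal $\mathfrak{m}'$ of $D[X]_A$ above $\mathfrak{P}$ satisfies $\mathfrak{m}'\cap D\supseteq P\neq(0)$, so Theorem~\ref{maximal w-ideal} places it in type (1) and the same uniqueness argument in $D$ yields $\mathfrak{m}'=MD[X]_A$.

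The main obstacle I anticipate is the nonzero-contraction case of the converse: translating $w$-$pm$-uniqueness in $D$ into uniqueness of a maximal $w$-ideal of $D[X]_A$ above $\mathfrak{p}D[X]_A$ requires ruling out maximal $w$-ideals of $D[X]$ that do not extend from $D$. This is exactly where the UMT hypothesis enters, ensuring that every non-upper-to-zero maximal $w$-ideal of $D[X]$ is extended; the integrally closed assumption works in parallel via Theorem~\ref{maximal w-ideal} to guarantee that $MD[X]_A$ (and upper-to-zero candidates) are in fact maximal $w$-ideals of $D[X]_A$. Dropping either hypothesis would demand a new way to control the maximal $w$-spectrum of $D[X]$ and $D[X]_A$, which is consistent with the authors' open question about weakening integral closure in the previous theorem.
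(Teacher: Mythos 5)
Your proposal is correct and follows essentially the same route as the paper: both directions rest on Theorem~\ref{maximal w-ideal}, Proposition~\ref{star input}(4), and Lemma~\ref{ideal contraction}, with the converse split on whether $\mathfrak{p}\cap D$ is zero and the UMT plus integrally closed hypotheses used exactly where you place them. The only difference is presentational (you argue directly and spell out that $\mathfrak{p}\cap D$ is a $w$-ideal, while the paper argues by contradiction and leaves that step implicit).
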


\begin{proof}
Suppose that $D[X]_A$ is a $w$-$pm$-domain,
and suppose to the contrary that $D$ is not a $w$-$pm$-domain.
Then there is a prime $w$-ideal $P$ of $D$
which is contained in two distinct maximal $w$-ideals of $D$,
say $M_1$ and $M_2$.
Hence $PD[X]_A$ is a nonzero prime ideal of $D[X]_A$ which is contained in
$M_1D[X]_A$ and $M_2D[X]_A$.
Since $M_1D[X]_A$ and $M_2D[X]_A$ are distinct maximal $w$-ideal of $D[X]_A$
by Lemma \ref{ideal contraction} and Theorem \ref{maximal w-ideal},
our assumption is false since $D[X]_A$ is a $w$-$pm$-domain.
Thus $D$ is a $w$-$pm$-domain.

For the remainder argument,
suppose that $D$ is a $w$-$pm$-domain, and
suppose to the contrary that $D[X]_A$ is not a $w$-$pm$-domain.
Then there is a prime $w$-ideal $\mathfrak{p}$ of $D[X]_A$
which is contained in two distinct maximal $w$-ideals of $D[X]_A$,
say $\mathfrak{m}_1$ and $\mathfrak{m}_2$.
If $\mathfrak{p} \cap D = (0)$,
then $\mathfrak{p} = \mathfrak{q}D[X]_A$,
where $\mathfrak{q}$ is an upper to zero in $D[X]$ disjoint from $A$.
Since $D$ is a UMT-domain,
$\mathfrak{q}$ is a maximal $w$-ideal of $D[X]$.
Hence $\mathfrak{p}$ is a maximal $w$-ideal of $D[X]_A$
by Theorem \ref{maximal w-ideal}.
This contradicts to our assumption.
Hence $\mathfrak{p} \cap D \neq (0)$.
This implies that
there exist maximal $w$-ideal $M_1$ and $M_2$ of $D$ such that
$\mathfrak{m}_1 = M_1 D[X]_A$ and
$\mathfrak{m}_2 = M_2 D[X]_A$ by Theorem \ref{maximal w-ideal}.
This implies that $\mathfrak{p} \cap D$ is contained in both $M_1$ and $M_2$,
a contradiction since $M_1 \neq M_2$ by 
Lemma \ref{ideal contraction}.
Hence $D[X]_A$ is a $w$-$pm$-domain.
\end{proof}

Let $D$ be an integral domain and
let $M$ be a $D$-module.
An ideal $I$ of $D$ is a {\it trace ideal} if $I = II^{-1}$.
In \cite{fontana 1987, heinzer 1988, lucas 1996, lucas 1999},
the authors have characterized integral domains using the trace ideal as follows:
\begin{enumerate}
\item[\rm(1)]
$D$ is a {\it TP domain} if every trace ideal of $D$ is prime.
\item[\rm(2)]
$D$ is an {\it RTP domain} if every trace ideal of $D$ is radical.
\item[\rm(3)]
$D$ is a {\it TPP domain} if the trace ideal of noninvertible primary ideal is prime.
\item[\rm(4)]
$D$ is an {\it LTP domain} if for each trace ideal $I$ of $D$ and
all minimal prime ideal $P$ of $I$, $ID_P = PD_P$.
\end{enumerate}
In \cite{lucas 1996, lucas 1999},
the authors proved the implications (1) $\Rightarrow$ (2) $\Rightarrow$ (3) $\Rightarrow$ (4).

In \cite{lucas 2020}, the authors investigated the trace properties of
Nagata rings and Serre's conjecture rings.
Note that if $D$ is a PID,
then $D$ has these trace properties.
This implies that if $D$ is a field,
then $D[X]_A$ has such trace properties
by Corollary \ref{sec 3 domain}.
Note that if $D$ is an LTP domain,
then every maximal ideal of $D$ is a $t$-ideal
\cite[Theorem 5(a)]{lucas 1999}.
Now, recall that $w\mhyphen{\rm Max}(D) = t\mhyphen{\rm Max}(D)$,
which means that if $D$ is not a field,
then every maximal ideal of $D[X]_A$ is not a $t$-ideal
by Theorems \ref{maximal cor}(2) and \ref{maximal w-ideal}.
This fact directly implies the following result,
which is one of the results that can be obtained from the fact that
${\rm Max}(D[X]_A) \neq w \mhyphen {\rm Max}(D[X]_A)$.

\begin{corollary}
Let $D$ be an integral domain.
Then the following assertions are equivalent.
\begin{enumerate}
\item[\rm(1)]
$D$ is a field.
\item[\rm(2)]
$D[X]_A$ is a TP domain.
\item[\rm(3)]
$D[X]_A$ is an RTP domain.
\item[\rm(4)]
$D[X]_A$ is a TPP domain.
\item[\rm(5)]
$D[X]_A$ is an LTP domain.
\end{enumerate}
\end{corollary}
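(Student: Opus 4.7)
The plan is to establish the cycle $(1)\Rightarrow(2)\Rightarrow(3)\Rightarrow(4)\Rightarrow(5)\Rightarrow(1)$. For $(1)\Rightarrow(2)$, if $D$ is a field then Corollary \ref{sec 3 domain} makes $D[X]_A$ a PID; in a PID every nonzero ideal $I$ is invertible, so $II^{-1}=D[X]_A$, and hence the only trace ideals are $(0)$ and $D[X]_A$, both of which are prime. The implications $(2)\Rightarrow(3)\Rightarrow(4)\Rightarrow(5)$ are the standard chain recorded in the paragraph preceding the corollary and proved in \cite{lucas 1996, lucas 1999}, so no new argument is needed there.

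The only substantive step is $(5)\Rightarrow(1)$, which I plan to prove by contrapositive. Assume $D$ is not a field and pick any $M\in{\rm Max}(D)$. Put $\mathfrak{m}:=(M+XD[X])_A$; by Theorem \ref{maximal cor}(2) this is a maximal ideal of $D[X]_A$, and by Lemma \ref{ideal contraction} it contracts to $M\neq(0)$ while containing $X$. The paragraph preceding the corollary records the key fact from \cite[Theorem 5(a)]{lucas 1999} that in any LTP domain every maximal ideal is a $t$-ideal. Thus it suffices to show that $\mathfrak{m}$ is not a $t$-ideal, which will give the desired contradiction.

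To rule out $\mathfrak{m}$ being a $t$-ideal, I use that every $t$-ideal is a $w$-ideal together with the identity $w\mhyphen{\rm Max}(R)=t\mhyphen{\rm Max}(R)$ quoted in the preliminaries of Section~\ref{sec 4}. If $\mathfrak{m}_t=\mathfrak{m}$, then $\mathfrak{m}$ is a proper $w$-ideal and, being a ring-maximal ideal of $D[X]_A$, is a maximal $w$-ideal of $D[X]_A$. The first half of Theorem \ref{maximal w-ideal}, which holds with no integrally-closed hypothesis on $D$, then forces $\mathfrak{m}$ to equal either $ND[X]_A$ for some $N\in w\mhyphen{\rm Max}(D)$ or $\mathfrak{p}D[X]_A$ for an upper-to-zero $\mathfrak{p}\subset D[X]$ disjoint from $A$. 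The former is impossible since $X\notin ND[X]_A$, and the latter because then $\mathfrak{m}\cap D=(0)$, whereas $\mathfrak{m}\cap D=M\neq(0)$ by Lemma \ref{ideal contraction}. The main (mild) obstacle is to be careful that the classification in Theorem \ref{maximal w-ideal} being used is the direction that applies to every integral domain (not the converse, which needs $D$ integrally closed); once this is checked, the argument is immediate from results already established in the paper.
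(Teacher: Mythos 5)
Your proof is correct and follows essentially the same route as the paper's (which is compressed into the paragraph immediately preceding the corollary): $(1)\Rightarrow(2)$ via $D[X]_A$ being a PID, the standard chain $(2)\Rightarrow(3)\Rightarrow(4)\Rightarrow(5)$, and $(5)\Rightarrow(1)$ by combining \cite[Theorem 5(a)]{lucas 1999} with Theorems \ref{maximal cor}(2) and \ref{maximal w-ideal} to show no maximal ideal of $D[X]_A$ is a $t$-ideal when $D$ is not a field. One cosmetic point: the contraction $(M+XD[X])_A\cap D=M$ follows simply because this ideal is proper and contains the maximal ideal $M$, rather than from Lemma \ref{ideal contraction}, which is stated only for extensions $ID[X]_A$ of ideals $I$ of $D$.
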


Let $D$ be an integral domain.
Recall that $D$ is an {\it H-domain} if
for any ideal $I$ of $D$ with $I^{-1} = D$,
there exists $J \in {\rm GV}(D)$ such that $J \subseteq I$.
Now, we investigate the condition on $D$
under which the Anderson ring become
H-domains when $D$ is integrally closed.

\begin{proposition}\label{H-domain}
Let $D$ be an integrally closed domain.
Then $D$ is an H-domain if and only if
$D[X]_A$ is an H-domain.
\end{proposition}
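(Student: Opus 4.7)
The plan is to treat the two directions separately. For $(\Leftarrow)$, which does not require integrally closedness, given an ideal $I$ of $D$ with $I^{-1}=D$, Proposition~\ref{star input}(1) gives $(ID[X]_A)^{-1}=D[X]_A$, so the H-domain hypothesis on $D[X]_A$ produces a GV-ideal $\mathcal{J}=(f_1/h_1,\ldots,f_n/h_n)D[X]_A\subseteq ID[X]_A$ with $f_i\in ID[X]$ and $h_i\in A$. Setting $J=c(f_1)+\cdots+c(f_n)\subseteq I$, the chain $\mathcal{J}\subseteq JD[X]_A$ together with $\mathcal{J}_v=D[X]_A$, Proposition~\ref{star input}(2), and Lemma~\ref{ideal contraction} forces $J_v=D$, so $J\in\mathrm{GV}(D)$ as required.

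For $(\Rightarrow)$, I would first reformulate the H-domain condition: a (non-field) integral domain is an H-domain if and only if every maximal $w$-ideal $\mathfrak{m}$ satisfies $\mathfrak{m}^{-1}\neq D$. (Proper $w$-ideals contain no GV-ideal, so both directions follow by contrapositive; maximal $w$-ideals exist because $D[X]_A$ is never a field by Remark~\ref{dimension remark}(1).) By Theorem~\ref{maximal w-ideal}, every $\mathfrak{m}\in w\mhyphen\mathrm{Max}(D[X]_A)$ is of one of two types. Type~(1), $\mathfrak{m}=MD[X]_A$ with $M\in w\mhyphen\mathrm{Max}(D)$, is immediate: Proposition~\ref{star input}(2) and Lemma~\ref{ideal contraction} show that $\mathfrak{m}_v=D[X]_A$ would force $M_v=D$, which by the H-domain property of $D$ would give $M=M_w=D$, contradicting the maximality of $M$.

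The main obstacle is Type~(2): $\mathfrak{m}=\mathfrak{p}D[X]_A$ with $\mathfrak{p}\in w\mhyphen\mathrm{Max}(D[X])$ an upper-to-zero disjoint from $A$. The strategy is to produce an explicit element of $\mathfrak{p}^{-1}$ outside $D[X]_A$. Using the Gauss-type identity $c(fh)_v=(c(f)c(h))_v$ (valid for $D$ integrally closed) together with the factorization $\mathfrak{p}=fK[X]\cap D[X]$, where $f\in D[X]$ generates $\mathfrak{p}K[X]$ (obtained by scaling any $K[X]$-generator by a common denominator of its coefficients), a direct computation gives $\mathfrak{p}=fc(f)^{-1}D[X]$, and then $c/f\in\mathfrak{p}^{-1}$ for every $c\in c(f)_v$. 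The disjointness $\mathfrak{p}\cap A=\emptyset$ forces $c(f)\not\subseteq f(0)D$, for otherwise $f/f(0)$ would lie in $\mathfrak{p}\cap A$; hence $c(f)_v\not\subseteq f(0)D$, and any $c\in c(f)_v\setminus f(0)D$ produces $c/f\in\mathfrak{p}^{-1}\setminus D[X]_A$ (since $c/f\in D[X]_A$ would force $c\in f(0)D$ by clearing denominators and evaluating at $0$). Combined with the general inclusion $\mathfrak{p}^{-1}D[X]_A\subseteq(\mathfrak{p}D[X]_A)^{-1}$, this yields $(\mathfrak{p}D[X]_A)^{-1}\supsetneq D[X]_A$ and completes the proof.
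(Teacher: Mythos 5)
Your proof is correct, but it follows a genuinely different route from the paper's. The paper reduces both directions to the characterization that an H-domain is precisely a domain in which every maximal $w$-ideal is divisorial \cite[Theorem 7.4.2]{wang book}: for ($\Leftarrow$) it transfers divisoriality of $MD[X]_A$ down to $M$ via Corollary \ref{*-ideal cor}, and for ($\Rightarrow$) it disposes of the uppers to zero by quoting that a maximal $w$-ideal which is an upper to zero is $w$-invertible, hence a $v$-ideal, and then writing $\mathfrak{p}=fID[X]$ to push divisoriality up through Proposition \ref{star input}(2). You instead work straight from the definition: your ($\Leftarrow$) direction extracts a GV-ideal of $D$ from the contents of the generators of a GV-ideal of $D[X]_A$ (and, as you correctly note, uses no integral closedness), while your ($\Rightarrow$) direction replaces the appeal to $w$-invertibility by an explicit element $c/f\in(\mathfrak{p}D[X]_A)^{-1}\setminus D[X]_A$ built from the content description $\mathfrak{p}=fc(f)^{-1}D[X]$. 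Your version is more self-contained, proves the reformulation ``H-domain iff $\mathfrak{m}^{-1}\neq D$ for every maximal $w$-ideal $\mathfrak{m}$'' rather than citing it, and makes visible exactly where disjointness from $A$ enters; the paper's version is shorter because it leans on quoted results and establishes the formally stronger (but, for maximal $w$-ideals, equivalent) fact that each $\mathfrak{m}$ is a $v$-ideal. One small point to patch in your write-up: when $\mathfrak{p}=XD[X]$, which is a maximal $w$-ideal upper to zero disjoint from $A$ whenever, say, $D$ is a UFD, you have $f=X$ and $f(0)=0$, so the justification ``otherwise $f/f(0)\in\mathfrak{p}\cap A$'' is vacuous; but then $f(0)D=(0)$, the inclusion $c(f)\subseteq f(0)D$ fails trivially, and $1/X$ still lies in $(\mathfrak{p}D[X]_A)^{-1}\setminus D[X]_A$, so the argument goes through once this case is separated.
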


\begin{proof}
Suppose that $D[X]_A$ is an H-domain.
It is sufficient to show that every maximal $w$-ideal of $D$
is a $v$-ideal of $D$ \cite[Theorem 7.4.2]{wang book}.
Let $M$ be a maximal $w$-ideal of $D$.
Then $MD[X]_A$ is a maximal $w$-ideal of $D[X]_A$ by Proposition \ref{star input}(4).
Hence $M[X]_A$ is a $v$-ideal,
which shows that $M$ is a $v$-ideal of $D$ by Corollary \ref{*-ideal cor}.
For the converse,
suppose that $D$ is an H-domain.
Then every maximal $w$-ideal of $D$ is a $v$-ideal \cite[Theorem 7.4.2]{wang book}.
Let $\mathfrak{m}$ be a maximal $w$-ideal of $D[X]_A$.
By Theorem \ref{maximal w-ideal},
either $\mathfrak{m} = MD[X]_A$ or $\mathfrak{m} = \mathfrak{p}D[X]_A$,
where $\mathfrak{p} \in w\mhyphen{\rm Max}(D[X])$ is an upper to zero in $D[X]$
disjoint from $A$ and $M \in w\mhyphen{\rm Max}(D)$.
Since $M$ is a $v$-ideal of $D$,
$MD[X]_A$ is a $v$-ideal of $D[X]_A$ by Corollary \ref{*-ideal cor}.
Note that $\mathfrak{p}$ is a $w$-invertible ideal of $D[X]$
\cite[Theorem 7.3.14]{wang book},
so $\mathfrak{p}$ is a $v$-ideal of $D[X]$
\cite[Theorem 7.2.14(2)]{wang book}.
Also, since $D$ is integrally closed,
there exist $f \in K[X]$ and a fractional ideal $I$ of $D$ 
such that $\mathfrak{p} = fID[X]$.
Hence $\mathfrak{m} = \mathfrak{p}D[X]_A$ is a $v$-ideal of $D[X]$
by Proposition \ref{star input}(2).
\end{proof}

Recall that an integral domain $D$ has {\it finite $w$-character} if
any nonzero nonunit element of $D$ is contained in
only a finite number of maximal $w$-ideals of $D$.
Recall that $D$ has finite character does not necessarily imply that
$D[X]_A$ has finite character by Proposition \ref{finite character}.
However, the next result shows that
if $D$ has finite $w$-character,
then $D[X]_A$ has finite $w$-character.

\begin{proposition}\label{w-finite character}
Let $D$ be an integral domain.
Then $D$ has finite $w$-character if and only if
$D[X]_A$ has finite $w$-character.
\end{proposition}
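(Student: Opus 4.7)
My plan is to prove both implications using Theorem \ref{maximal w-ideal} as the key structural tool, together with Lemma \ref{ideal contraction} to move between $D$ and $D[X]_A$.

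For the forward direction, assume $D$ has finite $w$-character and take a nonzero nonunit $\frac{f}{g} \in D[X]_A$. By Theorem \ref{maximal w-ideal}, every maximal $w$-ideal of $D[X]_A$ containing $\frac{f}{g}$ falls into one of two classes, and I will bound each separately. For a maximal $w$-ideal of type $MD[X]_A$ with $M \in w\mhyphen{\rm Max}(D)$, containment of $\frac{f}{g}$ is equivalent to $c(f) \subseteq M$; since $f \neq 0$, pick any nonzero coefficient $a_i$ of $f$. If $a_i$ is a unit, no such $M$ exists; if $a_i$ is a nonunit, every such $M$ must contain $a_i$, and there are only finitely many such $M$ by finite $w$-character. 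For a maximal $w$-ideal of type $\mathfrak{p}D[X]_A$, the extension $\mathfrak{p}K[X]$ is generated by an irreducible $q \in K[X]$ (where $K$ is the quotient field of $D$), and $f \in \mathfrak{p}$ forces $q \mid f$ in $K[X]$; since $f$ has finitely many irreducible factors in the UFD $K[X]$, there are only finitely many such $\mathfrak{p}$.

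For the converse, assume $D[X]_A$ has finite $w$-character and take a nonzero nonunit $a \in D$. For each $M \in w\mhyphen{\rm Max}(D)$ containing $a$, the ideal $MD[X]_A$ is a proper $w$-ideal of $D[X]_A$ (proper by Lemma \ref{ideal contraction}, a $w$-ideal by Proposition \ref{star input}(4)), hence lies in some maximal $w$-ideal $\mathfrak{m}_M$ of $D[X]_A$. Since $a \in \mathfrak{m}_M \cap D$ is nonzero, Theorem \ref{maximal w-ideal} rules out the upper-to-zero case and forces $\mathfrak{m}_M = M' D[X]_A$ for some $M' \in w\mhyphen{\rm Max}(D)$. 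Then $M D[X]_A \subseteq M' D[X]_A$ combined with Lemma \ref{ideal contraction} gives $M \subseteq M'$, and maximality of both as $w$-ideals yields $M = M'$. Thus the assignment $M \mapsto \mathfrak{m}_M = M D[X]_A$ is injective, so infinitely many $M \in w\mhyphen{\rm Max}(D)$ containing $a$ would produce infinitely many maximal $w$-ideals of $D[X]_A$ containing $a$, a contradiction.

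The main subtlety is in the forward direction, where Theorem \ref{maximal w-ideal} only guarantees a one-sided containment (not an equality) without the integrally-closed hypothesis; but this is exactly what is needed here, since we are bounding the maximal $w$-spectrum from above. In the converse, the hypothesis is similarly unnecessary because I only use that every maximal $w$-ideal of $D[X]_A$ sits in one of the two prescribed forms, which is the direction of the theorem that holds in full generality. No routine calculation is expected to cause trouble; the ``containment-only'' version of Theorem \ref{maximal w-ideal} is strong enough for both directions.
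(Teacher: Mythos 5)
Your proof is correct and follows essentially the same route as the paper: both directions rest on the containment-only half of Theorem \ref{maximal w-ideal}, Lemma \ref{ideal contraction}, the observation that extensions $MD[X]_A$ of maximal $w$-ideals are themselves maximal $w$-ideals (via containment in one), and the finiteness of irreducible factors of $f$ in $K[X]$ for the upper-to-zero case. If anything, your converse direction is slightly more careful than the paper's, which asserts without comment that $MD[X]_A$ is a maximal $w$-ideal; you supply the short argument showing this needs no integral-closedness hypothesis.
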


\begin{proof}
Suppose that $D[X]_A$ has finite $w$-character.
Let $a$ be a nonzero nonunit element of $D$.
Then $a$ is contained in only a finite number of maximal $w$-ideals of $D[X]_A$,
say $\mathfrak{m}_1,\dots,\mathfrak{m}_n$.
By Theorem \ref{maximal w-ideal},
for each $1 \leq i \leq n$,
there exists a maximal $w$-ideal $M_i$ of $D$ such that
$\mathfrak{m}_i =  M_iD[X]_A$ since $a$ is not contained in any upper to zero in $D[X]$.
Hence $a \in M_i$ by Lemma \ref{ideal contraction}.
Suppose to the contrary that
there is a maximal $w$-ideal $M$ of $D$ distinct to $M_1,\dots M_n$ such that
$a \in M$.
Then $a \in MD[X]_A$.
This contradicts to the fact that $MD[X]_A$ is a maximal $w$-ideal of $D[X]_A$
distinct to $\mathfrak{m}_1,\dots,\mathfrak{m}_n$.
This follows that $a$ is contained in only a finite number of maximal $w$-ideals of $D$.
For the converse, suppose that $D$ has finite $w$-character.
Let $\frac{f}{g}$ be a nonzero nonunit element of $D[X]_A$.
Suppose to the contrary that $\frac{f}{g}$ is contained in
an infinite number of maximal $w$-ideals of $D[X]_A$.
Then $f$ is contained in an infinite number of maximal $w$-ideals of $D[X]$.
If there exist an infinite number of maximal $w$-ideals of $D[X]$ containing $f$
which are not an upper to zero in $D[X]$,
then the coefficient of least degree term of $f$
is contained in an infinite number of maximal $w$-ideals of $D$.
This contradicts to the fact that $D$ has finite $w$-character.
Hence $f$ is contained in an infinite number of upper to zero maximal $w$-ideals of $D[X]$.
Let $\{Q_{\alpha} \,|\, \alpha \in \Lambda\}$ be the set of
upper to zero maximal $w$-ideal of $D[X]$ containing $f$.
Note that for each $\alpha \in \Lambda$,
there exists irreducible polynomial $f_{\alpha} \in K[X]$ such that
$Q_{\alpha} = f_{\alpha}K[X]\cap D[X]$.
This implies that $f$ has an infinite number of irreducible polynomial factors in $K[X]$,
a contradiction.
Thus $f$ is contained in only a finite number of maximal $w$-ideals of $D[X]$.
Consequently, $D[X]_A$ has finite $w$-character.
\end{proof}

Let $D$ be an integral domain.
Then $D$ is a {\it weakly Matlis domain} if
$D$ is a $w$-$pm$-domain which has finite $w$-character.
According to Corollary \ref{w-pm-domain} and
Proposition \ref{w-finite character},
we can directly obtain the following result.

\begin{corollary}
Let $D$ be an integral domain.
If $D[X]_A$ is a weakly Matlis domain,
then so is $D$.
Moreover, the converse holds
when $D$ is a UMT-domain.
\end{corollary}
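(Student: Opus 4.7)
The plan is to observe that the statement is a direct bookkeeping combination of the two results cited just before it, namely Corollary \ref{w-pm-domain} (comparing the $w$-$pm$ property between $D$ and $D[X]_A$) and Proposition \ref{w-finite character} (transferring finite $w$-character between $D$ and $D[X]_A$). Since a weakly Matlis domain is defined as a $w$-$pm$-domain with finite $w$-character, the conclusion is obtained by intersecting the two characterizations.

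For the forward direction, I would assume $D[X]_A$ is weakly Matlis, so it is a $w$-$pm$-domain with finite $w$-character. Applying the easy direction of Corollary \ref{w-pm-domain} gives that $D$ is a $w$-$pm$-domain, and applying the easy direction of Proposition \ref{w-finite character} gives that $D$ has finite $w$-character; hence $D$ is weakly Matlis. No hypothesis on $D$ is needed here.

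For the converse, I would assume $D$ is an integrally closed UMT-domain and a weakly Matlis domain. The integrally-closed-UMT hypothesis is exactly what lets me apply the nontrivial direction of Corollary \ref{w-pm-domain} to conclude that $D[X]_A$ is a $w$-$pm$-domain. Finite $w$-character of $D[X]_A$ then follows from the (unconditional) converse in Proposition \ref{w-finite character}. Combining these yields that $D[X]_A$ is weakly Matlis.

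There is really no obstacle in this argument; the work was already done in establishing the two invoked results. The only thing to keep track of is that the hypothesis ``integrally closed UMT-domain'' is used solely to invoke the converse part of Corollary \ref{w-pm-domain}, so it is only needed for the ``moreover'' half of the statement, which matches exactly the way the corollary is phrased.
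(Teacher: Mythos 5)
Your proposal is correct and matches the paper exactly: the paper states this corollary as a direct combination of Corollary \ref{w-pm-domain} and Proposition \ref{w-finite character}, using the unconditional direction of each for the forward implication and invoking the integrally closed UMT hypothesis only to access the converse of Corollary \ref{w-pm-domain}. Nothing further is needed.
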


An integral domain $D$ is a {\it $w$-almost Dedekind domain} if
$D_M$ is a Dedekind domain for all $M \in w\mhyphen{\rm Max}(D)$.

\begin{proposition}\label{w-almost Dedekind}
Let $D$ be an integral domain.
Then $D$ is a $w$-almost Dedekind domain if and only if
$D[X]_A$ is a $w$-almost Dedekind domain.
\end{proposition}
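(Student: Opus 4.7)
The plan is to apply Theorem \ref{maximal w-ideal} to reduce the equivalence to a local computation at each maximal $w$-ideal of $D[X]_A$. The key identification is that for any prime $\mathfrak{p}$ of $D[X]$ disjoint from $A$, one has $(D[X]_A)_{\mathfrak{p}D[X]_A} = D[X]_{\mathfrak{p}}$ (since $A \subseteq D[X] \setminus \mathfrak{p}$); in particular, this equals $(D_M[X])_{MD_M[X]}$ when $\mathfrak{p} = MD[X]$ for a prime $M$ of $D$, and it equals $K[X]_{\mathfrak{p}K[X]}$ when $\mathfrak{p}$ is an upper to zero in $D[X]$.

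For the forward direction, suppose $D$ is $w$-almost Dedekind, so each $D_M$ with $M \in w\mhyphen{\rm Max}(D)$ is a DVR. Since $D = \bigcap_{M \in w\mhyphen{\rm Max}(D)} D_M$ is an intersection of integrally closed rings, $D$ is itself integrally closed, so the full conclusion of Theorem \ref{maximal w-ideal} applies. For a maximal $w$-ideal of the form $MD[X]_A$, a uniformizer of $D_M$ is a prime element of the UFD $D_M[X]$, so $(D_M[X])_{MD_M[X]}$ is the localization of a UFD at a height-one principal prime, hence a DVR. For a maximal $w$-ideal of the form $\mathfrak{p}D[X]_A$ with $\mathfrak{p}$ an upper to zero, $K[X]_{\mathfrak{p}K[X]}$ is the localization of the PID $K[X]$ at a maximal ideal, again a DVR. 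Thus $D[X]_A$ is $w$-almost Dedekind.

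For the converse, assume $D[X]_A$ is $w$-almost Dedekind and fix $M \in w\mhyphen{\rm Max}(D)$. I would first verify that $MD[X]_A$ is itself a maximal $w$-ideal of $D[X]_A$: it is a $w$-ideal by Proposition \ref{star input}(4), and any maximal $w$-ideal above it cannot be of upper-to-zero type since $M \neq 0$, so by Theorem \ref{maximal w-ideal} it must be $M_1 D[X]_A$ for some $M_1 \in w\mhyphen{\rm Max}(D)$, and Lemma \ref{ideal contraction} forces $M_1 = M$. Hence $(D_M[X])_{MD_M[X]}$ is a DVR by hypothesis, and the main obstacle is to descend this property to $D_M$ itself. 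For the descent, I would take a uniformizer $\pi = g/h$ of this DVR with $g \in MD_M[X]$ and $h \in D_M[X] \setminus MD_M[X]$, observe that $X$ is a unit of the localization (as $X \notin MD_M[X]$) so that some coefficient $c$ of $g$ has valuation one, and then use the identity ${\rm Frac}(D_M) \cap (D_M[X])_{MD_M[X]} = D_M$ (proved by matching coefficients and exploiting that some coefficient of a polynomial outside $MD_M[X]$ is a unit) to conclude $MD_M = cD_M$ and $\bigcap_n M^n D_M = 0$, which together imply that $D_M$ is a DVR.
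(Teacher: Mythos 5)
Your proof is correct and follows essentially the same route as the paper: both use Theorem \ref{maximal w-ideal} to classify the maximal $w$-ideals of $D[X]_A$, identify the localization at each type ($(D_M[X])_{MD_M[X]}$, i.e.\ the Nagata ring $D_M[X]_{N_M}$, in case (1) and $K[X]_{\mathfrak{p}K[X]}$ in case (2)), and check each is a DVR, including the same argument that $MD[X]_A$ is itself a maximal $w$-ideal in the converse direction. The only difference is that where the paper cites \cite[Theorem 5.4(1)]{anderson 1985} for the equivalence ``$D_M$ is Dedekind iff $D_M[X]_{N_M}$ is Dedekind,'' you prove both directions by hand (localizing the UFD $D_M[X]$ at the principal prime generated by a uniformizer, and descending a uniformizer via $K \cap (D_M[X])_{MD_M[X]} = D_M$ together with $\bigcap_n M^nD_M = 0$), which makes the argument self-contained but does not change the structure.
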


\begin{proof}
Suppose that $D$ is a $w$-almost Dedekind domain.
Let $\mathfrak{m}$ be a maximal $w$-ideal of $D[X]_A$.
By Theorem \ref{maximal w-ideal},
either $\mathfrak{m} = MD[X]_A$ or $\mathfrak{m} = \mathfrak{p}D[X]_A$,
where $M \in w \mhyphen{\rm Max}(D)$ and
$\mathfrak{p} \in w \mhyphen{\rm Max}(D[X])$
is an upper to zero in $D[X]$ disjoint from $A$.
This implies that $(D[X]_A)_{\mathfrak{m}}$ is equal to either
$D_M[X]_{N_M}$ or $D[X]_{\mathfrak{p}}$,
where $N_M = \{f \in D_M[X] \,|\, c(f) = D_M\}$.
Since $D_M$ is a Dedekind domain,
$D_M[X]_{N_M}$ is also a Dedekind domain \cite[Theorem 5.4(1)]{anderson 1985}.
Also, note that $(D[X]_A)_{\mathfrak{p}}$ is a DVR \cite[Exercise 5.31]{wang book},
so it is a Dedekind domain since $(D[X]_A)_{\mathfrak{p}}$ is quasi-local.
This follows that $(D[X]_A)_{\mathfrak{m}}$ is a Dedekind domain
for all $\mathfrak{m} \in w \mhyphen{\rm Max}(D[X]_A)$.
Hence $D[X]_A$ is a $w$-almost Dedekind domain.
For the converse,
suppose that $D[X]_A$ is a $w$-almost Dedekind domain.
Let $M$ be a maximal $w$-ideal of $D[X]_A$.
Then $MD[X]_A$ is a maximal $w$-ideal of $D[X]_A$
by Theorem \ref{maximal w-ideal}.
Hence $D_M[X]_{N_M} = (D[X]_A)_{MD[X]_A}$ is a Dedekind domain,
which shows that $D_M$ is a Dedekind domain \cite[Theorem 5.4(1)]{anderson 1985}.
Thus $D$ is a $w$-almost Dedekind domain.
\end{proof}

Similar to Proposition \ref{w-almost Dedekind},
we can naturally consider the $w$-local properties of the Anderson ring.
In fact, we can derive many $w$-local properties of the Anderson ring.
An integral domain $D$ is a {\it $w$-locally Noetherian domain} if
$D_M$ is a Noetherian domain for all $M \in w\mhyphen{\rm Max}(D)$.

\begin{remark}\label{w-local properties}
{\rm
Let $D$ be an integral domain.
Note that $D$ is a $w$-locally Noetherian domain
if and only if $D[X]_N$ is a $w$-locally Noetherian domain.
Also, if $D$ is a DVR,
then $D$ is a Noetherian domain.
By the similar to the proof of Proposition \ref{w-almost Dedekind},
we obtain
\begin{enumerate}
\item[\rm(1)]
$D$ is a $w$-locally Noetherian domain if and only if
$D[X]_A$ is a $w$-locally Noetherian domain.
\end{enumerate}
Similarly, if every DVR has a property (P),
and $D$ has a property (P) if and only if $D[X]_N$ has a property (P),
then we obtain
\begin{enumerate}
\item[\rm(2)]
$D_M$ has a property (P) for all $M \in w\mhyphen{\rm Max}(D)$
if and only if
$(D[X]_A)_{\mathfrak{m}}$ has a property (P)
for all $\mathfrak{m} \in w\mhyphen{\rm Max}(D[X]_A)$.
\end{enumerate}
}
\end{remark}

Let $D$ be an integral domain.
An ideal $I$ of $D$ is of {\it $w$-finite type} if
there exists a finitely generated subideal $J$ of $I$ such that
$I_w = J_w$.
Recall that $D$ is a {\it strong Mori domain} if
every nonzero ideal of $D$ is of $w$-finite type.
A well-known fact of strong Mori domains is that
$D$ is a strong Mori domain if and only if
$D$ is a $w$-locally Noetherian domain and
has finite $w$-character \cite[Theorem 1.9]{wang 1999}.
By Proposition \ref{w-finite character} and Remark \ref{w-local properties}(1), we have

\begin{corollary}{\rm (cf. \cite[Theorem 2.2]{chang 2005})}
Let $D$ be an integral domain.
Then $D$ is a strong Mori domain if and only if
$D[X]_A$ is a strong Mori domain.
\end{corollary}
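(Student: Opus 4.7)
The plan is to reduce the statement directly to the two ingredients that have already been set up in this section, using the well-known characterization cited just before the corollary: an integral domain is a strong Mori domain if and only if it is a $w$-locally Noetherian domain that has finite $w$-character (\cite[Theorem 1.9]{wang 1999}). Thus the corollary amounts to showing that both of these conditions transfer between $D$ and $D[X]_A$.

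First I would invoke Proposition \ref{w-finite character}, which gives the equivalence that $D$ has finite $w$-character if and only if $D[X]_A$ has finite $w$-character. Next, I would invoke Remark \ref{w-local properties}(1), which gives the equivalence that $D$ is a $w$-locally Noetherian domain if and only if $D[X]_A$ is a $w$-locally Noetherian domain. Combining these two biconditionals with the characterization of strong Mori domains yields the desired equivalence in one line:
\[
\text{$D$ is strong Mori} \iff \text{$D$ is $w$-locally Noetherian and has finite $w$-character} \iff \text{$D[X]_A$ is $w$-locally Noetherian and has finite $w$-character} \iff \text{$D[X]_A$ is strong Mori.}
\]

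There is no real obstacle here, since the substantive work has already been done: Proposition \ref{w-finite character} required analysis of the maximal $w$-spectrum (including the upper-to-zero case) via Theorem \ref{maximal w-ideal}, and Remark \ref{w-local properties}(1) packages the $w$-local Noetherian transfer. The only thing worth noting, and which I would point out in passing, is that unlike the case of finite character versus finite $w$-character (where the former does not transfer, per Proposition \ref{finite character}), the $w$-version is symmetric, which is exactly what makes the strong Mori property transfer without requiring additional hypotheses such as integral closedness. Hence the proof consists solely of citing these earlier results and the characterization theorem.
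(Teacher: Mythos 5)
Your proposal is correct and matches the paper's own argument exactly: the paper derives this corollary by combining the characterization of strong Mori domains as $w$-locally Noetherian domains with finite $w$-character \cite[Theorem 1.9]{wang 1999} with Proposition \ref{w-finite character} and Remark \ref{w-local properties}(1). No further comment is needed.
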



\pagestyle{plain}
\bibliographystyle{amsplain}

\end{document}